\providecommand{\U}[1]{\protect\rule{.1in}{.1in}}
\newtheorem{theorem}{Theorem}[section]
\newtheorem{proposition}[theorem]{Proposition}
\newtheorem{lemma}[theorem]{Lemma}
\theoremstyle{definition}
\newtheorem{remark}[theorem]{Remark}
\keywords{...}
\thanks{D. Pellegrino is supported by CNPq and Grant 2019/0014 Paraiba State Research Foundation (FAPESQ). P. Rueda is supported by the Ministerio de Econom\'{\i}a, Industria y Competitividad and FEDER under project MTM2016-77054-C2-1-P}
\begin{document}
\title[Optimal constants of classical inequalities in complex sequence spaces]{Optimal constants of classical inequalities for complex scalars}
\author[W. V. Cavalcante]{Wasthenny V. Cavalcante}
\address{Departamento de Matem\'{a}tica \\
Universidade Federal da Para\'{\i}ba \\
58.051-900 - Jo\~{a}o Pessoa, Brazil.}
\email{wasthenny.wvc@gmail.com}
\author[D. N\'u\~nez]{Daniel N\'{u}\~{n}ez-Alarc\'{o}n}
\address{Departamento de Matem\'{a}ticas\\
Universidad Nacional de Colombia\\
111321 - Bogot\'a, Colombia}
\email{danielnunezal@gmail.com and dnuneza@unal.edu.co}
\author[D. Pellegrino]{Daniel Pellegrino}
\address{Departamento de Matem\'{a}tica \\
Universidade Federal da Para\'{\i}ba \\
58.051-900 - Jo\~{a}o Pessoa, Brazil.}
\email{pellegrino@pq.cnpq.br and dmpellegrino@gmail.com}
\author[P. Rueda]{Pilar Rueda}
\address{Departamento de An\'alisis Matem\'atico\\
Universitat de Val\`encia\\
C/ Dr. Moliner 50, 46100 Burjassot (Valencia). Spain}
\email{pilar.rueda@uv.es}
\thanks{}
\subjclass[2010]{11Y60, 46B09, 46G25, 60B11.}
\keywords{Steinhaus variables, mixed $\left(  \ell_{\frac{p}{p-1}},\ell_{2}\right)
$-Littlewood inequality, multiple Khinchine inequality.}
\maketitle

\begin{abstract}
In this paper we obtain the optimal constants of some classical inequalities,
such as the multiple Khinchine inequality for Steinhaus variables and the
mixed Littlewood inequality for complex scalars.

\end{abstract}
\tableofcontents

\section{Introduction}

Let $\mathbb{K}$ be the real or complex scalar field. It is well-known that
the solution of optimization problems needs different techniques when dealing
with real or complex scalars. We illustrate this phenomenon with the
Bohnenblust--Hille inequalities for multilinear functionals. The
Bohnenblust--Hille inequalities assert that for every continuous $m$-linear
form $T:\mathbb{K}^{n}\times\cdots\times\mathbb{K}^{n}\rightarrow\mathbb{K}$,
the optimal constants $C_{m}^{\mathbb{K}}(n)\geq1$ in
\begin{equation}
\left(  \sum_{i_{1},\ldots,i_{m}=1}^{n}\left\vert T(e_{i_{1}},\ldots,e_{i_{m}%
})\right\vert ^{\frac{2m}{m+1}}\right)  ^{\frac{m+1}{2m}}\leq C_{m}%
^{\mathbb{K}}(n)\left\Vert T\right\Vert \label{u88}%
\end{equation}
satisfy%
\[
C_{m}^{\mathbb{K}}:=\lim_{n\rightarrow\infty}C_{m}^{\mathbb{K}}(n)<\infty.
\]
Above, $\mathbb{K}^{n}$ is considered with the $\sup$ norm and $\left\Vert
T\right\Vert =\sup\left\{  \left\vert T\left(  x^{\left(  1\right)
},...,x^{\left(  m\right)  }\right)  \right\vert :\left\Vert x^{\left(
i\right)  }\right\Vert \leq1;~1\leq i\leq m\right\}  $. The optimal values of
$C_{m}^{\mathbb{K}}(n)$ and $C_{m}^{\mathbb{K}}$ are important in different
fields of Mathematics and its applications (see, for instance \cite{ar,
montanaro}).

The best known upper bounds for $C_{m}^{\mathbb{K}}(n),C_{m}^{\mathbb{K}}$ are
presented in \cite{bayart}, as a consequence of the Khinchine inequality.
However, the optimal values of these constants are, of course, a more subtle
problem. It is convenient to observe that the Krein--Milman Theorem helps us
to re-write the optimization problem in an apparently better presentation.
From now on $\mathcal{L}\left(  ^{m}\mathbb{K}^{n}\right)  $ denotes the space
of $m$-linear forms from $\mathbb{K}^{n}\times\cdots\times\mathbb{K}^{n}$ to
$\mathbb{K}$ and $B_{\mathcal{L}\left(  ^{m}\mathbb{K}^{n}\right)  }$ denotes
its closed unit ball. Note that%
\[
C_{m}^{\mathbb{K}}(n)=\max\left\{  \left(  \sum_{i_{1},\ldots,i_{m}=1}%
^{n}\left\vert T(e_{i_{1}},\ldots,e_{i_{m}})\right\vert ^{\frac{2m}{m+1}%
}\right)  ^{\frac{m+1}{2m}}:T\in B_{\mathcal{L}\left(  ^{m}\mathbb{K}%
^{n}\right)  }\right\}
\]
and since the function%
\begin{align*}
f  &  :B_{\mathcal{L}\left(  ^{m}\mathbb{K}^{n}\right)  }\rightarrow
\mathbb{R}\\
f(T)  &  =\left(  \sum_{i_{1},\ldots,i_{m}=1}^{n}\left\vert T(e_{i_{1}}%
,\ldots,e_{i_{m}})\right\vert ^{\frac{2m}{m+1}}\right)  ^{\frac{m+1}{2m}}%
\end{align*}
is convex and $B_{\mathcal{L}\left(  ^{m}\mathbb{K}^{n}\right)  }$ is compact,
a consequence of the Krein--Milman Theorem (see \cite{cpt} for details) tells
us that
\[
C_{m}^{\mathbb{K}}(n)=\max\left\{  \left(  \sum_{i_{1},\ldots,i_{m}=1}%
^{n}\left\vert T(e_{i_{1}},\ldots,e_{i_{m}})\right\vert ^{\frac{2m}{m+1}%
}\right)  ^{\frac{m+1}{2m}}:T\in\mathcal{C}_{m,n}\right\}  ,
\]
where $\mathcal{C}_{m.n}$ is the set of extreme points of $B_{\mathcal{L}%
\left(  ^{m}\mathbb{K}^{n}\right)  }$. So, to find the exact value of
$C_{m}^{\mathbb{K}}(n)$ it is enough to know the set $\mathcal{C}_{m.n}$ and,
if this set is finite, we just need to find the maximum over a finite number
of tests. Recently, in \cite{cpt}, it was shown that the set $\mathcal{C}%
_{m.n}$ is finite in the case of real scalars, and an algorithm was
constructed, furnishing all the points of $\mathcal{C}_{m.n}$. So, with
suitable computational assistance, the exact constants $C_{m}^{\mathbb{R}}(n)$
are determined (see also \cite{vieira} for a successful implementation of the
algorithm). The complex case remains open and seems to be even more difficult,
because the geometry of the unit ball in the case of complex scalars is
apparently \textquotedblleft smoother\textquotedblright\ and the number of
extreme points is quite likely infinite. For methods for complex optimization
problems we refer, for instance, to \cite{jiang, zhang, zhang2},

For any function $f$ we shall consider $f(\infty):=\lim_{s\rightarrow\infty
}f(s)$ and for any $s\geq1$ we denote the conjugate index of $s$ by $s^{\ast
},$ i.e., $\frac{1}{s}+\frac{1}{s^{\ast}}=1$. Here $1^{\ast}$ means $\infty$.
As a matter of fact, the Bohnenblust--Hille inequalities are a particular
instance of a broader family of inequalities that we call Hardy--Littlewood inequalities:

\begin{theorem}
[Hardy--Littlewood inequalities](see \cite{abps}) Let $p_{1},...,p_{m}%
\in\left[  2,\infty\right]  $ be such that
\[
0<\frac{1}{p_{1}}+\cdots+\frac{1}{p_{m}}\leq\frac{1}{2}.
\]
If $t_{1},\ldots,t_{m}\in\left[  \frac{1}{1-\left(  \frac{1}{p_{1}}%
+\cdots+\frac{1}{p_{m}}\right)  },2\right]  $ are such that
\begin{equation}
\frac{1}{t_{1}}+\cdots+\frac{1}{t_{m}}\leq\frac{m+1}{2}-\left(  \frac{1}%
{p_{1}}+\cdots+\frac{1}{p_{m}}\right)  , \label{dez11}%
\end{equation}
then, for every $m$-linear form $A:\ell_{p_{1}}^{n}\times\cdots\times
\ell_{p_{m}}^{n}\rightarrow\mathbb{K}$, every bijection $\sigma:\left\{
1,...,m\right\}  \rightarrow\left\{  1,...,m\right\}  $ and every positive
integer $n$, the optimal constants $C_{\left(  t_{1},...,t_{m}\right)
}^{\sigma,(p_{1},...,p_{m})\mathbb{K}}(n)$ in
\begin{equation}
\left(  \sum_{i_{\sigma(1)}=1}^{n}\left(  \ldots\left(  \sum_{i_{\sigma(m)}%
=1}^{n}\left\vert A\left(  e_{i_{1}},\ldots,e_{i_{m}}\right)  \right\vert
^{t_{m}}\right)  ^{\frac{t_{m-1}}{t_{m}}}\ldots\right)  ^{\frac{t_{1}}{t_{2}}%
}\right)  ^{\frac{1}{t_{1}}}\leq C_{\left(  t_{1},...,t_{m}\right)  }%
^{\sigma,(p_{1},...,p_{m})\mathbb{K}}(n)\Vert A\Vert\label{q3}%
\end{equation}
satisfy%
\[
C_{\left(  t_{1},...,t_{m}\right)  }^{\sigma,(p_{1},...,p_{m})\mathbb{K}%
}:=\lim_{n\rightarrow\infty}C_{\left(  t_{1},...,t_{m}\right)  }%
^{\sigma,(p_{1},...,p_{m})\mathbb{K}}(n)<\infty.
\]
Moreover, the exponents are optimal.
\end{theorem}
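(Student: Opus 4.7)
The plan is to establish inequality \eqref{q3} in two stages: first at the vertices of the admissible polyhedron defined by \eqref{dez11}, via a Khinchine--Minkowski argument, and then on the whole polyhedron via Hölder interpolation. Optimality of the exponents is then addressed through an explicit random-matrix construction.

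The vertex cases are those in which one $t_j$ attains the left endpoint $\frac{1}{1-(1/p_1+\cdots+1/p_m)}$ and the remaining $t_j$'s equal $2$; using the bijection $\sigma$, one may place the small exponent in the outermost position. Evaluating $A$ on sign-randomized vectors $\sum_i \varepsilon_i^{(j)} e_i$ in each coordinate and invoking the operator-norm bound $|A(\cdot)|\leq \|A\|\prod_j\|\sum_i \varepsilon_i^{(j)} e_i\|_{p_j}$ introduces a factor $n^{\sum 1/p_j}$ on the right-hand side. Passing to $L^s$ norms in the sign variables and applying the Khinchine inequality coordinatewise --- with Bernoulli variables in the real case and Steinhaus variables in the complex case --- together with Minkowski's integral inequality, one recovers the mixed $(\ell_{t_1},\ldots,\ell_{t_m})$-norm of the coefficients on the left-hand side, with a constant depending only on Khinchine constants and on $m, p_1,\ldots,p_m$.

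To pass from the vertices to an arbitrary admissible tuple $(t_1,\ldots,t_m)$, I would apply Hölder's inequality in each summation variable $i_{\sigma(k)}$, writing the target mixed norm as a weighted product of vertex mixed norms with exponents summing to the prescribed $1/t_j$. Since each vertex inequality has an $n$-independent constant, the combined inequality inherits this property, yielding the boundedness of $C^{\sigma,(p_1,\ldots,p_m)\mathbb{K}}_{(t_1,\ldots,t_m)}(n)$ uniformly in $n$ and hence the finiteness of the limiting constant. For the sharpness of the exponents, I would take $A$ to be the $m$-linear form whose coefficients $A(e_{i_1},\ldots,e_{i_m})$ are independent Rademacher or Steinhaus random variables. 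Khinchine-type bounds give $\mathbb{E}\|A\|\lesssim n^{\alpha}$ with $\alpha$ computable in terms of $p_1,\ldots,p_m$, while the left-hand side of \eqref{q3} equals $n^{1/t_1+\cdots+1/t_m}$ deterministically; any violation of \eqref{dez11} then forces the ratio to diverge with $n$.

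The hard part will be the vertex Khinchine--Minkowski step. Minkowski's integral inequality permits interchanging an inner $L^q$ norm with an outer $L^p$ norm only when $p\geq q$, so the order of sums and integrations must be carefully coordinated with the bijection $\sigma$ and with the position of the distinguished small exponent. In the complex case one must additionally rely on Khinchine estimates for Steinhaus variables, whose optimal constants are themselves nontrivial. Once this vertex estimate is secured, the Hölder interpolation and the random-matrix lower bound follow along standard lines.
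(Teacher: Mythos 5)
The paper itself gives no proof of this theorem: it is quoted from \cite{abps}, so your proposal can only be measured against the argument in that reference. Your overall architecture --- prove the inequality at the vertices of the polyhedron \eqref{dez11} where one exponent equals $\lambda=\frac{1}{1-(1/p_1+\cdots+1/p_m)}$ and the rest equal $2$, pass to the interior by a H\"older/Blei-type interpolation of mixed norms, and obtain sharpness from random multilinear forms --- is indeed the structure of the proof in \cite{abps}. However, the specific mechanism you propose for the vertex step fails. If you evaluate $A$ on sign vectors $\sum_i\varepsilon_i^{(j)}e_i$ in \emph{every} coordinate and bound $|A|$ by $\Vert A\Vert\prod_j\Vert\sum_i\varepsilon_i^{(j)}e_i\Vert_{p_j}$, you pick up the factor $n^{1/p_1+\cdots+1/p_m}$, and nothing on the left-hand side compensates for it: after Khinchine you recover at best the $\ell_2$-norm of the coefficients, and passing from $\ell_2$ to the mixed $(\ell_\lambda,\ell_2,\ldots,\ell_2)$-norm by H\"older costs a further $n^{1/\lambda-1/2}=n^{1/2-\sum 1/p_j}$, leaving a net divergent factor $n^{1/2}$. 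So the constant you produce is not dimension-free, and the conclusion $\lim_n C(n)<\infty$ does not follow. The correct treatment of the distinguished coordinate uses the $\ell_{p}$-structure through duality --- the coefficient sequence of a bounded linear functional on $\ell_{p}^n$ lies in the unit ball of $\ell_{p^*}^n$, with no $n$-dependence --- and randomizes only over coordinates where the sign vectors have norm $O(1)$; this is precisely how the mixed $(\ell_{p^*},\ell_2)$-Littlewood inequality is proved in Proposition \ref{multiracsam} of the present paper and in Lemma 2.1 of \cite{abps}. For several finite $p_j$'s the vertex case is genuinely the technical heart of \cite{abps} and requires more than coordinatewise Khinchine plus Minkowski.

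A secondary gap concerns optimality. For a form with independent Rademacher or Steinhaus coefficients, Khinchine-type bounds do not control $\Vert A\Vert=\sup_{B_{\ell_{p_1}^n}\times\cdots\times B_{\ell_{p_m}^n}}|A|$; one needs the Kahane--Salem--Zygmund inequality (a net/union-bound or chaining argument over the product of unit balls) to obtain $\mathbb{E}\Vert A\Vert\lesssim n^{\frac{m+1}{2}-\sum_j\min\{\frac12,\frac1{p_j}\}}$. Once that is in hand, comparing with the deterministic value $n^{1/t_1+\cdots+1/t_m}$ of the left-hand side does yield the optimality of \eqref{dez11} as you describe.
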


Above, as usual, $\ell_{p}^{n}$ is $\mathbb{K}^{n}$ with the $\ell_{p}$-norm.
When $m=2,$ $p_{1}=p_{2}=\infty,$ and $\left(  t_{1},t_{2}\right)  =\left(
1,2\right)  $ or $\left(  2,1\right)  $ the optimal constants for
$\mathbb{K}=\mathbb{R}$ are $\sqrt{2}$ and for $\mathbb{K=C}$ are $\left(
2/\sqrt{\pi}\right)  $ (see \cite[page 31]{blei}). In most of the other cases
the optimal constants $C_{\left(  t_{1},...,t_{m}\right)  }^{\sigma
,(p_{1},...,p_{m})\mathbb{K}}$ and $C_{\left(  t_{1},...,t_{m}\right)
}^{\sigma,(p_{1},...,p_{m})\mathbb{K}}(n)$ are unknown, except for
$\mathbb{K}=\mathbb{R}$ and $\sigma=id$ (and all $\sigma$ when symmetry
arguments are possible), in the following particular cases (see also
\cite{alb, aron, caro} for other special cases):

\begin{enumerate}
\item[(i)] $p_{1}=\cdots=p_{m}=\infty$ $\ $and $t_{1}=\cdots=t_{m-1}=2$ and
$t_{m}=1;$

\item[(ii)] $p_{2}=\cdots=p_{m}=\infty$ $\ $and $p_{1}\in\lbrack\alpha
,\infty)$ with $\alpha\approx2.18006$ and $t_{1}=\cdots=t_{m-1}=2$ and
$t_{m}=\frac{p}{p-1};$

\item[(iii)] $p_{1}=\cdots=p_{m}=\infty$ $\ $and $t_{1}=\cdots=t_{m}%
\in\{1,2\};$

\item[(iv)] $p_{2}=\cdots=p_{m}=\infty$ $\ $and $p_{1}\in\lbrack2,\alpha)$
with $\alpha\approx2.18006$ and $t_{1}=\cdots=t_{m-1}=2$ and $t_{m}=\frac
{p}{p-1}.$
\end{enumerate}

The following table shows the optimal constants for the cases (i)-(iv),
obtained by \cite{pell, racsam, pt, ns}, respectively:\bigskip

\begin{center}%
\begin{tabular}
[c]{|l|l|l|}\hline
Case & Year & Optimal constant\\\hline
(i) & 2016, \cite{pell} & $\left(  2^{\frac{1}{2}}\right)  ^{m-1}$\\\hline
(ii) & 2017, \cite{racsam} & $\left(  2^{\frac{1}{2}-\frac{1}{p}}\right)
^{m-1}$\\\hline
(iii) & 2018, \cite{pt} & $\left(  2^{\frac{1}{2}}\right)  ^{m-1}$\\\hline
(iv) & 2019, \cite{ns} & $\left(  \frac{1}{\sqrt{2}}\left(  \frac
{\Gamma\left(  \frac{2p-1}{2p-2}\right)  }{\sqrt{\pi}}\right)  ^{\frac{1}%
{p}-1}\right)  ^{m-1}$\\\hline
\end{tabular}

\end{center}

\bigskip

Above, the exact value of $\alpha\approx2.18006$ is the following:
\[
\alpha=\frac{p_{0}}{p_{0}-1},\text{ }%
\]
where $p_{0}\in(1,2)$ is the unique real number satisfying%
\[
\Gamma\left(  \frac{p_{0}+1}{2}\right)  =\frac{\sqrt{\pi}}{2}.
\]
In the finite-dimensional case (i.e., $C_{\left(  t_{1},...,t_{m}\right)
}^{\sigma.(p_{1},...,p_{m})\mathbb{R}}(n)$ and $n<\infty$), for $p_{1}%
=\cdots=p_{m}=\infty$, the optimal constants are \textquotedblleft
formally\textquotedblright\ known, using the algorithm developed in
\cite{cpt}, but the time needed to run the algorithm is impeditive, with the
current technology (see also \cite{vieira}).

The main goal of the present paper is to obtain the optimal constants for the
cases (i)-(iv) when $\mathbb{K}=\mathbb{C}$. In Section 2 we develop an
approximation technique, fundamental for the proof of our main result. In
Section 3 we obtain the optimal constants of the multiple Khinchine inequality
for Steinhaus variables and, in the final section, we finally prove our main
result: the sharp constants for the cases (i)-(iv) when $\mathbb{K}%
=\mathbb{C}$.

\section{Approximating the $\sup$ norm of complex multilinear forms}

For $p\geq1$, we introduce the following notation: $X_{p}:=\ell_{p}\left(
\mathbb{C}\right)  =\ell_{p}$ and $X_{\infty}:=c_{0}\left(  \mathbb{C}\right)
=c_{0}$. From now on, $\left(  e_{k}\right)  _{k=1}^{\infty}$ denotes the
sequence of canonical vectors in $X_{p}$.

\smallskip Let $p_{1},\ldots,p_{m}\in\lbrack1,\infty]$. We recall that for a
continuous $m$-linear form $T:X_{p_{1}}\times\cdots\times X_{p_{m}}%
\rightarrow\mathbb{C}$, the $\sup$-norm of $T$ is given by
\begin{align*}
\left\Vert T\right\Vert  &  =\sup\left\{  \left\vert T\left(  x^{\left(
1\right)  },...,x^{\left(  m\right)  }\right)  \right\vert :\left\Vert
x^{\left(  i\right)  }\right\Vert _{X_{p_{i}}}\leq1;~1\leq i\leq m\right\} \\
&  =\sup\left\{  \left\vert \sum_{i_{1},...,i_{m}=1}^{\infty}a_{i_{1}\cdots
i_{m}}x_{i_{1}}^{\left(  1\right)  }...x_{i_{m}}^{\left(  m\right)
}\right\vert :\left\Vert x^{\left(  i\right)  }\right\Vert _{X_{p_{i}}}%
\leq1;~1\leq i\leq m\right\}  ,
\end{align*}
where $T\left(  e_{i_{1}},...,e_{i_{m}}\right)  =a_{i_{1}\cdots i_{m}}$, for
all $i_{1},...,i_{m}\in\mathbb{N}.$

In this section we present a basic lemma whose aim is to get approximations of
$\Vert T\Vert$, that will be useful in Section 4. Our approach is inspired by
\cite{jiang}.

For each integer $M\geq2$, we consider
\[
T_{M}:=\left\{  \exp\left(  \frac{2j\pi}{M}i\right)  :j=0,\ldots,M-1\right\}
,
\]
and
\[
T_{\infty}=\{\exp(ti):t\in\lbrack0,2\pi)\},
\]
where $i=\sqrt{-1}$. Observe that $T_{M}$ is the set of the $M$th roots of unity.

Let
\[
D_{M}:=conv\left(  T_{M}\right)  \text{ and }D_{\infty}:=conv\left(
T_{\infty}\right)  ,
\]
where $conv$ means the convex hull.

Observe that $D_{\infty}$ is the closed unit disk $\mathbb{D}$ and, trivially,
$D_{M}\subset D_{\infty}$. Obviously, $D_{M}$ is closed and a symmetric convex
body in $\mathbb{C}$.

\begin{lemma}
\label{apothem} Let $M\geq3$ be an integer. If $r_{M}:=\left(  \frac{1}%
{2}+\frac{1}{2}\cos\left(  \frac{2\pi}{M}\right)  \right)  ^{\frac{1}{2}} ,$
then
\[
B[0,r_{M}]\subset D_{M},
\]
where $B[0,r_{M}]$ denotes the closed ball with center in $0$ and radius
$r_{M}.$
\end{lemma}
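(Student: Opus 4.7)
The plan is to recognize $D_M$ as a regular $M$-gon inscribed in the unit circle and reduce the statement to the classical fact that the incircle (radius equal to the apothem) of such a polygon is contained in it. The first step is a trigonometric simplification: by the half-angle identity $\cos^{2}(\theta/2) = (1+\cos\theta)/2$, one sees immediately that
\[
r_{M} = \left(\tfrac{1}{2} + \tfrac{1}{2}\cos\tfrac{2\pi}{M}\right)^{1/2} = \cos\tfrac{\pi}{M}.
\]
This is precisely the apothem of the regular $M$-gon with vertices on the unit circle, so the claim to prove becomes $B[0,\cos(\pi/M)]\subset D_{M}$.

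Next I would describe $D_{M}$ as an intersection of half-planes. Because $D_{M}=\mathrm{conv}(T_{M})$ is the regular $M$-gon with vertices $\zeta_{j}:=\exp(2\pi j i/M)$, $j=0,\ldots,M-1$, it equals the intersection of the $M$ closed half-planes bounded by the lines through consecutive vertices $\zeta_{j}$ and $\zeta_{j+1}$ (indices modulo $M$) and containing the origin. To prove the inclusion, it suffices to show that each of these lines is at distance at least $\cos(\pi/M)$ from $0$; equivalently, that $B[0,\cos(\pi/M)]$ lies on the origin's side of every such line.

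The key computation is then the distance from $0$ to the line through $\zeta_{j}$ and $\zeta_{j+1}$. By symmetry of the regular polygon, this foot of perpendicular is the midpoint
\[
\tfrac{1}{2}(\zeta_{j}+\zeta_{j+1}) = \exp\!\left(\tfrac{(2j+1)\pi i}{M}\right)\cos\tfrac{\pi}{M},
\]
which has modulus exactly $\cos(\pi/M)=r_{M}$. Therefore the distance from $0$ to each edge of $D_{M}$ is $r_{M}$, which gives $B[0,r_{M}]\subset D_{M}$.

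I do not expect any real obstacle: the content is elementary Euclidean geometry once the half-angle identity reveals $r_{M}$ as the apothem. The only care needed is to argue cleanly that $D_{M}$, being the convex hull of the vertices, coincides with the intersection of the $M$ edge-bounded half-planes containing $0$; this follows from the fact that a convex polygon in the plane is the intersection of the supporting half-planes at its edges. No further Khinchine- or Littlewood-type machinery is required for this lemma.
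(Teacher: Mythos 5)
Your proof is correct and follows essentially the same route as the paper: both identify $r_{M}$ as the apothem of the regular $M$-gon $D_{M}$ by computing the modulus of the edge midpoint $\tfrac{1}{2}(\zeta_{j}+\zeta_{j+1})$. Your version is slightly more explicit in justifying the final inclusion via the supporting half-plane description (the paper simply asserts that a circle of radius $r_{M}$ fits inside once the apothem is computed), and the simplification $r_{M}=\cos(\pi/M)$ is a nice touch, but the substance is identical.
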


\begin{proof}
Note that $0\in D_{M}$. In fact,
\[
0=\frac{1}{M}+\frac{1}{M}\exp\left(  \frac{2\pi}{M}i\right)  +\cdots+\frac
{1}{M}\exp\left(  \frac{2(M-1)\pi}{M}i\right)  ,
\]
and $\sum_{i=0}^{M-1}\frac{1}{M}=1$. We also know that $D_{M}$ is a regular
polygon with apothem given by
\[
\left\vert \frac{1}{2}\exp\left(  \frac{2j\pi}{M}i\right)  +\frac{1}{2}%
\exp\left(  \frac{2(j+1)\pi}{M}i\right)  \right\vert .
\]
Computing the apothem, we have%
\begin{align*}
&  \left\vert \frac{1}{2}\exp\left(  \frac{2j\pi}{M}i\right)  +\frac{1}{2}%
\exp\left(  \frac{2(j+1)\pi}{M}i\right)  \right\vert ^{2}\\
&  =\frac{1}{4}\left(  \left(  \cos\left(  \frac{2j\pi}{M}\right)
+\cos\left(  \frac{2(j+1)\pi}{M}\right)  \right)  ^{2}+\left(  \sin\left(
\frac{2j\pi}{M}\right)  +\sin\left(  \frac{2(j+1)\pi}{M}\right)  \right)
^{2}\right) \\
&  =\frac{1}{4}\left(  \left(  2+2\cos\left(  \frac{2j\pi}{M}\right)
\cos\left(  \frac{2(j+1)\pi}{M}\right)  \right)  +2\sin\left(  \frac{2j\pi}%
{M}\right)  \sin\left(  \frac{2(j+1)\pi}{M}\right)  \right) \\
&  =\frac{1}{4}\left(  2+2\cos\left(  \frac{2\pi}{M}\right)  \right) \\
&  =r_{M}^{2}.
\end{align*}
Thus, it is possible to draw a circle inside $D_{M}$ with radius $r_{M}$.
\end{proof}

Let $N$ and $M$ be positive integers, $M\geq3.$ Since $D_{M}^{N}=D_{M}%
\times\overset{N}{\cdots}\times D_{M}$ is closed and a symmetric convex body
in $\mathbb{C}$, we can to consider the Minkowski functional $p_{M,N}$
associated to the product set $D_{M}^{N}:$
\[
p_{M,N}(z)=\inf\{t>0:z\in tD_{M}^{N}\},\ \ \ z\in\mathbb{C}^{N}.
\]
It is well-known that \label{proppm} the function $p_{M,N}$ has the following properties:

\begin{enumerate}
\item[(a)] $p_{M,N}(\alpha z)=\alpha p_{M,N}(z)$ for all $\alpha>0$ and
$z\in\mathbb{C}^{N}$;

\item[(b)] $p_{M,N}(z+w)\leq p_{M,N}(z)+p_{M,N}(w)$ for any $z,w\in
\mathbb{C}^{N}$;

\item[(c)] $D_{M}^{N}=\left\{  z\in\mathbb{C}^{N}:p_{M,N}(z)\leq1\right\}  $;
\end{enumerate}

Observe that, if $z\in\mathbb{C}^{N}$, $z\not =\left(  0,...,0\right)  $, by
(a) and (c) we have $\frac{z}{p_{M,N}(z)}\in D_{M}^{N}\subseteq\mathbb{D}$,
and therefore
\[
\Vert z\Vert\leq p_{M,N}(z).
\]
Moreover, since $r_{M}\frac{z}{\Vert z\Vert}\in B[0,r_{M}]$, by Lemma
\ref{apothem} we conclude that
\[
p_{M,N}\left(  r_{M}\frac{z}{\Vert z\Vert}\right)  \leq1,
\]
or, equivalently
\[
p_{M,N}(z)\leq r_{M}^{-1}\Vert z\Vert.
\]

Summarizing, if $N,M\in\mathbb{N}$, $M\geq3$, and $z\in\mathbb{C}^{N}$ we
have
\begin{equation}
\Vert z\Vert\leq p_{M,N}(z)\leq r_{M}^{-1}\Vert z\Vert. \label{(d)}%
\end{equation}

\begin{remark}
When we allow $M\rightarrow\infty$, we have $p_{M,N}(z)\rightarrow\Vert
z\Vert$ because $r_{M}\rightarrow1$.
\end{remark}

Let $N$ and $M$ be positive integers, $M\geq3$. For any $m$-linear form
$T:X_{\infty}^{N}\times\cdots\times X_{\infty}^{N}\rightarrow\mathbb{C}$ we
consider
\[
\Vert T\Vert_{M}=\inf\left\{  C>0:|T(x^{\left(  1\right)  },...,x^{\left(
m\right)  })|\leq C\prod_{i=1}^{m}p_{M,N}(x^{\left(  i\right)  })\quad\forall
x^{\left(  i\right)  }\in\mathbb{C}^{N},i=1,\ldots m\right\}  .
\]
Clearly, for all $M\geq3$ we have%
\[
\Vert T\Vert_{M}=\sup\left\{  |T(x^{\left(  1\right)  },...,x^{\left(
m\right)  })|:x^{\left(  1\right)  },...,x^{\left(  m\right)  }\in D_{M}%
^{N}\right\}  .
\]

\begin{remark}
A straightforward consequence of the Krein-Milman Theorem gives us%
\[
\Vert T\Vert_{M}=\sup\left\{  |T(x^{\left(  1\right)  },...,x^{\left(
m\right)  })|:x^{\left(  1\right)  },...,x^{\left(  m\right)  }\in T_{M}%
^{N}\right\}  .
\]

\end{remark}

We now have all the ingredients to approximate the $\sup-$norm $\Vert.\Vert$.

\begin{proposition}
\label{estnorm} If $N,M\in\mathbb{N}$, $M\geq3$ and $T:X_{\infty}^{N}%
\times\cdots\times X_{\infty}^{N}\rightarrow\mathbb{C}$ is an $m$-linear form,
then
\[
\Vert T\Vert_{M}\leq\Vert T\Vert\leq r_{M}^{-m}\Vert T\Vert_{M},
\]
where $r_{M}$ is as in Lemma \ref{apothem}.
\end{proposition}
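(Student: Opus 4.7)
The plan is to read off both inequalities directly from the two-sided comparison $\Vert z\Vert \leq p_{M,N}(z) \leq r_M^{-1}\Vert z\Vert$ established in (\ref{(d)}), together with the $m$-linearity of $T$ and the already noted identification of $\Vert T\Vert_M$ with the supremum of $|T(x^{(1)},\ldots,x^{(m)})|$ over $x^{(i)}\in D_M^N$.

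For the lower inequality $\Vert T\Vert_M\le\Vert T\Vert$, I would simply observe that every $x\in D_M^N$ satisfies $\Vert x\Vert\le p_{M,N}(x)\le 1$ by the left half of (\ref{(d)}) together with property (c) of $p_{M,N}$. Hence $D_M^N$ is contained in the closed unit ball of $X_\infty^N$, so the supremum defining $\Vert T\Vert_M$ is taken over a subset of the one defining $\Vert T\Vert$, and the inequality is immediate.

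For the upper inequality $\Vert T\Vert\le r_M^{-m}\Vert T\Vert_M$, take arbitrary $x^{(1)},\ldots,x^{(m)}$ in the closed unit ball of $X_\infty^N$. By property (a) of $p_{M,N}$ and the right half of (\ref{(d)}),
$$p_{M,N}(r_M x^{(i)}) = r_M\, p_{M,N}(x^{(i)}) \le r_M \cdot r_M^{-1}\Vert x^{(i)}\Vert \le 1,$$
so by property (c), $r_M x^{(i)}\in D_M^N$ for each $i$. Using the $m$-linearity of $T$ and the definition of $\Vert T\Vert_M$,
$$|T(x^{(1)},\ldots,x^{(m)})| = r_M^{-m}\,|T(r_M x^{(1)},\ldots,r_M x^{(m)})| \le r_M^{-m}\Vert T\Vert_M.$$
Taking the supremum over the unit ball yields the bound.

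There is no genuine obstacle in this argument; it is purely a matter of bookkeeping once (\ref{(d)}) is in hand. The only point that requires a moment's thought is to rescale \emph{inward} by the factor $r_M$ so that the rescaled vectors land in $D_M^N$, which is precisely what forces the $r_M^{-m}$ factor to appear after extracting $m$ scalars by multilinearity. Letting $M\to\infty$ afterward (via $r_M\to 1$) will recover $\Vert T\Vert$ from the discretized norms $\Vert T\Vert_M$, which is the use this proposition is designed for in Section~4.
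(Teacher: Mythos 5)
Your proof is correct and follows essentially the same route as the paper: both inequalities are read off from the sandwich $\Vert z\Vert\leq p_{M,N}(z)\leq r_M^{-1}\Vert z\Vert$ of (\ref{(d)}). The only cosmetic difference is that the paper bounds $\prod_i p_{M,N}(x^{(i)})\leq r_M^{-m}\prod_i\Vert x^{(i)}\Vert$ directly in the Minkowski-functional definition of $\Vert T\Vert_M$, whereas you rescale each vector by $r_M$ into $D_M^N$ and extract $r_M^{-m}$ by multilinearity --- the same computation in dual form.
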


\begin{proof}
Since $D_{M}\subset D_{\infty}$, it is immediate that $\Vert T\Vert_{M}%
\leq\Vert T\Vert$. To prove the other inequality, note that given $x^{\left(
1\right)  },...,x^{\left(  m\right)  }\in\mathbb{C}^{N}$, we have
\[
|T(x^{\left(  1\right)  },...,x^{\left(  m\right)  })|\leq\Vert T\Vert
_{M}\prod_{i=1}^{m}p_{M,N}(x^{\left(  i\right)  })\leq\Vert T\Vert_{M}\left(
r_{M}^{-m}\prod_{i=1}^{m}\Vert x^{\left(  i\right)  }\Vert\right)  .
\]
Therefore
\[
\Vert T\Vert\leq r_{M}^{-m}\Vert T\Vert_{M}.
\]

\end{proof}

For the general case of $m$-linear forms $T:X_{p_{1}}^{N}\times\cdots\times
X_{p_{m}}^{N}\rightarrow\mathbb{C}$, let $A$ be a non-void subset of $\left\{
1,...,m\right\}  $ and assume that $p_{j}=\infty$ for every $j\in A$. If $A$
is properly contained in $\{1,\ldots,m\}$ we define the norm
\[
\Vert T\Vert_{A,M}:=\sup\left\{  |T(x^{\left(  1\right)  },...,x^{\left(
m\right)  })|:x^{\left(  i\right)  }\in B_{\ell_{p_{i}}^{N}}\text{ and
}x^{\left(  j\right)  }\in T_{M}^{N}\text{, for all }i\in A^{\complement
}\text{ and }j\in A\right\}  ,
\]
where $A^{\complement}$ denotes the complement of $A$ in $\left\{
1,...,m\right\}  $. We observe that $\Vert T\Vert_{\left\{  1,...,m\right\}
,M}=\Vert T\Vert_{M}$.

Proposition \ref{estnorm} can be extended as follows:

\begin{lemma}
[Basic Lemma]\label{664}Let $N,M,m\in\mathbb{N}$, $M\geq3$, and $p_{1}%
,...,p_{m}\in\lbrack1,\infty]$. If $A$ is a non-void subset of $\left\{
1,...,m\right\}  $ and $p_{j}=\infty$, for every $j\in A$, then
\[
\Vert T\Vert_{A,M}\leq\Vert T\Vert\leq r_{M}^{-\left\vert A\right\vert }\Vert
T\Vert_{A,M},
\]
for all $m$-linear form $T:X_{p_{1}}^{N}\times\cdots\times X_{p_{m}}%
^{N}\rightarrow\mathbb{C}$, where $\left\vert A\right\vert $ denotes the
cardinality of the set $A$ and $r_{M}$ is as in Lemma \ref{apothem}.
\end{lemma}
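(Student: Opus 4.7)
The two inequalities will be established separately, following the template of Proposition \ref{estnorm} but tracking only the factors of $r_M^{-1}$ that come from the $|A|$ many ``$\infty$-slots'' we intend to approximate.

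For the left-hand inequality $\|T\|_{A,M}\leq\|T\|$, I would argue directly from the definitions: any tuple $(x^{(1)},\ldots,x^{(m)})$ admissible in the sup defining $\|T\|_{A,M}$ satisfies $x^{(i)}\in B_{\ell_{p_i}^N}$ for $i\in A^\complement$, and, since $T_M^N\subset B_{\ell_\infty^N}^N=B_{\ell_{p_j}^N}$ for each $j\in A$, we also have $x^{(j)}\in B_{\ell_{p_j}^N}$. Hence every admissible tuple is norming for $\|T\|$ and the inequality follows.

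For the right-hand inequality, the plan is to fix an arbitrary tuple $(x^{(1)},\ldots,x^{(m)})$ with $\|x^{(i)}\|_{X_{p_i}}\leq1$ for all $i$, leave the coordinates $i\in A^\complement$ untouched, and rescale each of the $|A|$ remaining coordinates by $r_M$ so that they land in $D_M^N$. Concretely, for $j\in A$ we have $\|x^{(j)}\|_\infty\leq1$, so every component of $x^{(j)}$ lies in $\mathbb{D}$; multiplying by $r_M$ and invoking Lemma~\ref{apothem} coordinate-wise shows $r_M x^{(j)}\in D_M^N$. Now I would invoke the Krein--Milman argument (in the same form as in the Remark preceding the lemma, applied jointly to the $|A|$ variables indexed by $A$ while holding the others fixed) to conclude that
\[
\sup\bigl\{|T(y^{(1)},\ldots,y^{(m)})|:y^{(i)}\in B_{\ell_{p_i}^N}\text{ for }i\in A^\complement,\ y^{(j)}\in D_M^N\text{ for }j\in A\bigr\}=\|T\|_{A,M},
\]
because the extreme points of $D_M^N$ are exactly $T_M^N$ and the modulus of a multilinear form is a convex function of each argument.

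Applying this to the rescaled tuple gives $|T(x^{(1)},\ldots,r_M x^{(j)}|_{j\in A},\ldots,x^{(m)})|\leq\|T\|_{A,M}$, and multilinearity in the $|A|$ slots being scaled pulls out the factor $r_M^{|A|}$, yielding $|T(x^{(1)},\ldots,x^{(m)})|\leq r_M^{-|A|}\|T\|_{A,M}$. Taking the supremum over admissible tuples produces $\|T\|\leq r_M^{-|A|}\|T\|_{A,M}$, as required. The only non-routine step is the Krein--Milman reduction to roots of unity when several variables simultaneously range in $D_M^N$; this is where the proof really uses that the extreme points of a product convex body are products of extreme points of the factors, and I would mention this explicitly rather than hiding it inside the reference to the earlier remark.
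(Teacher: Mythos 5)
Your proof is correct and is essentially the argument the paper intends: the paper states this lemma without proof as a direct extension of Proposition \ref{estnorm}, and your rescaling of the $\left\vert A\right\vert$ coordinates by $r_{M}$ (justified coordinate-wise by Lemma \ref{apothem}) together with the Krein--Milman reduction from $D_{M}^{N}$ to $T_{M}^{N}$ is precisely the content of that proposition restricted to the slots in $A$. The only cosmetic difference is that you bypass the Minkowski functional $p_{M,N}$ in favor of the direct rescaling, and you make explicit the extreme-point step that the paper relegates to a remark.
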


\section{Multiple Khinchine inequality for Steinhaus variables: best
constants}

The Khinchine inequality was proved in 1923 by A. Khinchine (\cite{kh}) to
deal with the notion of certain random walks as we illustrate with the
following example: suppose that we have $n$ real numbers $a_{1},...,a_{n}$ and
a fair coin. When we flip the coin, if it comes up heads, you choose
$\alpha_{1}=a_{1}$, and if it comes up tails, you choose $\alpha_{1}=-a_{1}.$
Repeating the process, after having flipped the coin $k$ times we have
\[
\alpha_{k+1}:=\alpha_{k}+a_{k+1},
\]
if it comes up heads and
\[
\alpha_{k+1}:=\alpha_{k}-a_{k+1},
\]
if it comes up tails. After $n$ steps, what should be the expected value of
\[
|\alpha_{n}|=\left\vert \sum_{k=1}^{n}\pm a_{k}\right\vert ?
\]
Khinchine's inequality shows that the \textquotedblleft
average\textquotedblright\ $\left(  \frac{1}{2^{n}}\left\vert \sum
\limits_{\varepsilon_{1},...,\varepsilon_{n}=1,-1}\varepsilon_{j}%
a_{j}\right\vert ^{p}\right)  ^{\frac{1}{p}}$ is in some sense equivalent to
the $l_{2}$-norm of $\left(  a_{n}\right)  .$ More precisely, it asserts that
for any $p>0$ there are constants $A_{p},B_{p}>0$ such that
\begin{equation}
A_{p}\left(  \sum\limits_{j=1}^{n}\left\vert a_{j}\right\vert ^{2}\right)
^{\frac{1}{2}}\leq\left(  \frac{1}{2^{n}}\left\vert \sum\limits_{\varepsilon
_{1},...,\varepsilon_{n}=1,-1}\varepsilon_{j}a_{j}\right\vert ^{p}\right)
^{\frac{1}{p}}\leq B_{p}\left(  \sum\limits_{j=1}^{n}\left\vert a_{j}%
\right\vert ^{2}\right)  ^{\frac{1}{2}} \label{65}%
\end{equation}
for all sequence of scalars $\left(  a_{i}\right)  _{i=1}^{n}$ and all
positive integers $n.$ The Khinchine inequality can be rewritten using the
Rademacher variables
\[
r_{n}(t):=sign\left(  \sin2^{n}\pi t\right)
\]
as follows:
\[
A_{p}\left(  \sum_{n=1}^{N}\left\vert a_{n}\right\vert ^{2}\right)  ^{\frac
{1}{2}}\leq\left(  \int\limits_{0}^{1}\left\vert \sum_{n=1}^{N}a_{n}%
r_{n}\left(  t\right)  \right\vert ^{p}dt\right)  ^{\frac{1}{p}}\leq
B_{p}\left(  \sum_{n=1}^{N}\left\vert a_{n}\right\vert ^{2}\right)  ^{\frac
{1}{2}}%
\]
for any linear combination (real or complex) for every positive integer $N$.

Obviously, $A_{p}=1$ for all $p\geq2$ and $B_{p}=1$ for all $p\leq2$. Steckin
\cite[1961]{ste}, Young \cite[1976]{you}, Szarek \cite[1976]{sza} contributed
to the problem of finding the best constants $A_{p}$ and $B_{p}$, for some
non-trivial values of $p$. Finally in 1982 Haagerup \cite{Ha} solved the
problem completely, using techniques of analytic probability.

Haagerup (\cite{Ha}) proved that
\[
A_{p}=\sqrt{2}\left(  \frac{\Gamma\left(  \frac{p+1}{2}\right)  }{\sqrt{\pi}%
}\right)  ^{\frac{1}{p}},\ \ \text{ for }1.85\approx p_{0}<p<2
\]
and
\[
A_{p}=2^{\frac{1}{2}-\frac{1}{p}},\ \ \text{ for }0<p\leq p_{0}\approx1.85.
\]
Above and henceforth $\Gamma$ denotes the famous Gamma function. The exact
definition of the critical value $p_{0}$ is the following: $p_{0}\in(1,2)$ is
the unique real number satisfying%

\begin{equation}
\Gamma\left(  \frac{p_{0}+1}{2}\right)  =\frac{\sqrt{\pi}}{2}. \label{pezero}%
\end{equation}
On the other hand,
\[
B_{p}=\sqrt{2}\left(  \frac{\Gamma\left(  \frac{p+1}{2}\right)  }{\sqrt{\pi}%
}\right)  ^{\frac{1}{p}},\ \ \text{ }p>2.
\]
The counterpart for the average $\left(  \frac{1}{2^{n}}\left\vert
\sum\limits_{\varepsilon_{1},...,\varepsilon_{n}=1,-1}\varepsilon_{j}%
a_{j}\right\vert ^{p}\right)  ^{\frac{1}{p}}$ in the complex framework is
\begin{equation}
\left(  \frac{1}{2\pi}\right)  ^{N}\int_{0}^{2\pi}\ldots\int_{0}^{2\pi
}\left\vert \sum_{n=1}^{N}a_{n}e^{it_{n}}\right\vert ^{p}dt_{1}\cdots dt_{N}.
\label{222}%
\end{equation}
For the sake of simplicity we shall denote (\ref{222}) by
\begin{equation}
\mathbb{E}\left\vert \sum_{n=1}^{N}a_{n}\varepsilon_{n}\right\vert
^{p}\nonumber
\end{equation}
where $\varepsilon_{n}$ are Steinhaus variables; i.e. variables which are
uniformly distributed on the circle $S^{1}$. The following version of the
Khinchine inequality holds and in this case the Khinchine inequality is known
as the Khinchine inequality for Steinhaus variables:

\begin{theorem}
[Khinchine's inequality for Steinhaus variables]\label{stein} For every
$0<p<\infty$, there exist constants $\widetilde{A_{p}}$ and $\widetilde{B_{p}%
}$ such that
\begin{equation}
\widetilde{A_{p}}\left(  \sum_{n=1}^{N}\left\vert a_{n}\right\vert
^{2}\right)  ^{\frac{1}{2}}\leq\left(  \mathbb{E}\left\vert \sum_{n=1}%
^{N}a_{n}\varepsilon_{n}\right\vert ^{p}\right)  ^{\frac{1}{p}}\leq
\widetilde{B_{p}}\left(  \sum_{n=1}^{N}\left\vert a_{n}\right\vert
^{2}\right)  ^{\frac{1}{2}} \label{est}%
\end{equation}
for every positive integer $N$ and scalars $a_{1},\ldots,a_{N}$, where
$\varepsilon_{n}$ are Steinhaus variables.
\end{theorem}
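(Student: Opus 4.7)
The plan is to prove (\ref{est}) by establishing it first at $p=2$ and then extrapolating to every other $p$. The pivot case $p=2$ is immediate: by independence of the Steinhaus variables and the identity $\mathbb{E}[\varepsilon_n\overline{\varepsilon_m}]=\delta_{nm}$ (a consequence of rotational invariance), the expansion of $|\sum a_n\varepsilon_n|^2$ yields $\mathbb{E}|\sum a_n\varepsilon_n|^2 = \sum|a_n|^2$, so (\ref{est}) holds with $\widetilde{A_2}=\widetilde{B_2}=1$.

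For $p>2$ the lower bound is free with $\widetilde{A_p}=1$ by monotonicity of $L^p$ norms on a probability space. For the upper bound I would first handle even integers $p=2k$ through a direct moment computation: by independence together with the rotational identity $\mathbb{E}[\varepsilon_n^j\overline{\varepsilon_n}^\ell]=\delta_{j\ell}$, the multinomial expansion of $\bigl(\sum a_n\varepsilon_n\bigr)^k\bigl(\overline{\sum a_n\varepsilon_n}\bigr)^k$ collapses to
\[
\mathbb{E}\Bigl|\sum_{n=1}^N a_n\varepsilon_n\Bigr|^{2k} = k!\sum_{|\alpha|=k}\binom{k}{\alpha}|a|^{2\alpha} \leq k!\,\Bigl(\sum_{n=1}^N|a_n|^2\Bigr)^k,
\]
where the final step is the multinomial theorem applied to $(\sum|a_n|^2)^k$. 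Hence $\widetilde{B_{2k}}\leq (k!)^{1/(2k)}$. For arbitrary $p>2$, I would choose an even integer $2k\geq p$ and use the contractive inclusion $L^{2k}\hookrightarrow L^p$ on a probability space to conclude $\widetilde{B_p}\leq (k!)^{1/(2k)}<\infty$.

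The range $0<p<2$ requires the opposite extrapolation. The upper bound $\widetilde{B_p}=1$ comes once more from $L^p$-monotonicity. For the lower bound, I would set $S:=\sum_{n=1}^N a_n\varepsilon_n$, fix $q>2$ (say $q=4$), and pick $\theta\in(0,1)$ satisfying $\frac{1}{2}=\frac{\theta}{p}+\frac{1-\theta}{q}$. H\"older's inequality then gives $\|S\|_2\leq \|S\|_p^{\theta}\|S\|_q^{1-\theta}$. By the first step, $\|S\|_2=(\sum|a_n|^2)^{1/2}$, and by the second step, $\|S\|_q\leq \widetilde{B_q}\|S\|_2$; substituting and solving for $\|S\|_p$ produces the positive lower bound $\widetilde{A_p}\geq \widetilde{B_q}^{-(1-\theta)/\theta}$.

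The main obstacle is the lower bound for $p<2$: it rests crucially on having a \emph{finite} upper bound at some exponent larger than $2$, so the entire argument is really driven by the moment estimate carried out in the second step. Obtaining the sharp values of $\widetilde{A_p}$ and $\widetilde{B_p}$ (as Haagerup did in the Rademacher case, and as will be pursued in multilinear form in the remainder of this section) demands considerably finer harmonic-probabilistic input than the qualitative existence statement of Theorem~\ref{stein} requires.
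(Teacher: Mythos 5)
Your argument is correct, but it is worth saying up front that the paper does not prove Theorem~\ref{stein} at all: it is quoted as a classical result (with references to Sawa, K\"onig--Kwapie\'n, Baernstein--Culverhouse and K\"onig), and the $m=1$ case is simply used as the base of the induction in Theorem~\ref{d}. What the paper actually needs later is not the qualitative statement but the \emph{optimal} values of $\widetilde{A_p}$ and $\widetilde{B_p}$, which it imports from K\"onig's Theorem~\ref{zentral}; those sharp constants (in particular the two-point and the $N\to\infty$ Gaussian-limit evaluations (\ref{0001}) and (\ref{0004})) are the real input to Proposition~\ref{e} and to the final section, and your elementary scheme does not reach them except at even integers. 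That said, as a self-contained proof of the existence statement your route is sound and standard: orthogonality at $p=2$; the even-moment expansion plus $L^{2k}\hookrightarrow L^p$ for $p>2$; and Lyapunov/H\"older interpolation of $\|S\|_2$ between $\|S\|_p$ and $\|S\|_q$ to get a positive $\widetilde{A_p}$ for $p<2$ (the H\"older exponents $\frac{p}{2\theta}$ and $\frac{q}{2(1-\theta)}$ are indeed $\geq 1$ for your choice of $\theta$, so the log-convexity of $L^p$ norms is legitimate even for $p<1$). One small slip: the displayed moment identity should read
\[
\mathbb{E}\Bigl|\sum_{n=1}^N a_n\varepsilon_n\Bigr|^{2k}=\sum_{|\alpha|=k}\binom{k}{\alpha}^{2}|a|^{2\alpha},
\]
since the surviving diagonal terms carry the coefficient $\bigl(k!/\alpha!\bigr)^2$; the expression $k!\binom{k}{\alpha}$ you wrote is only an upper bound for $\binom{k}{\alpha}^{2}$ (because $\binom{k}{\alpha}\leq k!$), so your ``$=$'' should be ``$\leq$''. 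The final estimate $\mathbb{E}|S|^{2k}\leq k!\,(\sum_n|a_n|^2)^k$ and everything downstream of it are unaffected --- and in fact $(k!)^{1/(2k)}$ coincides with K\"onig's sharp $\widetilde{B_{2k}}=\bigl(\Gamma(\tfrac{2k+2}{2})\bigr)^{1/(2k)}$, so at even integers your upper bound is already optimal.
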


Sawa \cite[1985]{saw}, K\"{o}nig and Kwapie\'{n} \cite[2001]{kk}, and
Baernstein with Culverhouse \cite[2002]{BC}, contributed to the problem of
finding the best constants $\widetilde{A_{p}}$ and $\widetilde{B_{p}}$, for
several values of $p$. Recently, in 2014 K\"{o}nig \cite{kon} finally solved
the problem for all $0<p<\infty$.

In this case, the optimal estimates for $\widetilde{A_{p}}$ and $\widetilde
{B_{p}}$, $0<p<\infty$, are given by:%
\begin{align*}
\widetilde{A_{p}}  &  :=\min\left\{  \left(  \Gamma\left(  \frac{p+2}%
{2}\right)  \right)  ^{\frac{1}{p}},\sqrt{2}\left(  \frac{\Gamma\left(
\frac{p+1}{2}\right)  }{\Gamma\left(  \frac{p+2}{2}\right)  \sqrt{\pi}%
}\right)  ^{\frac{1}{p}},1\right\} \\
\widetilde{B_{p}}  &  :=\max\left\{  \left(  \Gamma\left(  \frac{p+2}%
{2}\right)  \right)  ^{\frac{1}{p}},1\right\}  .
\end{align*}
The multiple Khinchine inequality is a natural and useful extension of the
Khinchine inequality:

\begin{theorem}
[Multiple Khinchine inequality]\label{multikhin}Let $0<p<\infty$,
$m\in\mathbb{N}$, and $\left(  y_{i_{1}...i_{m}}\right)  _{i_{1,}...,i_{m}%
=1}^{N}$ be an array of scalars. There are constants $J_{m,p},~K_{m,p}\geq1$,
such that%
\begin{align}
J_{m,p}\left(  \sum_{i_{1},...,i_{m}=1}^{N}\left\vert y_{i_{1}...i_{m}%
}\right\vert ^{2}\right)  ^{\frac{1}{2}}  &  \leq\left(  \int\limits_{I^{m}%
}\left\vert \sum_{i_{1},...,i_{m}=1}^{N}r_{i_{1}}\left(  t_{1}\right)
...r_{i_{m}}\left(  t_{m}\right)  y_{i_{1}...i_{m}}\right\vert ^{p}%
dt_{1}...dt_{m}\right)  ^{\frac{1}{p}}\label{ppp111}\\
&  \leq K_{m,p}\left(  \sum_{i_{1},...,i_{m}=1}^{N}\left\vert y_{i_{1}%
...i_{m}}\right\vert ^{2}\right)  ^{\frac{1}{2}}\nonumber
\end{align}
for all $N\in\mathbb{N}$, where $r_{i_{j}}\left(  t_{j}\right)  $ are the
Rademacher functions, for all $j\in\left\{  1,...,m\right\}  $ and $i_{j}%
\in\left\{  1,...,N\right\}  .$
\end{theorem}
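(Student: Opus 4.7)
The plan is to proceed by induction on $m$. The base case $m=1$ is precisely the classical Khinchine inequality \eqref{65}, which provides admissible constants $J_{1,p}=A_p$ and $K_{1,p}=B_p$.

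For the inductive step, assume the result for $m-1$ factors. Setting
\[F(t_1,\ldots,t_m):=\sum_{i_1,\ldots,i_m=1}^{N}r_{i_1}(t_1)\cdots r_{i_m}(t_m)\,y_{i_1\cdots i_m}\]
and freezing $(t_2,\ldots,t_m)$, I would write $F=\sum_{i_1}r_{i_1}(t_1)\,c_{i_1}(t_2,\ldots,t_m)$, where $c_{i_1}(t_2,\ldots,t_m):=\sum_{i_2,\ldots,i_m}r_{i_2}(t_2)\cdots r_{i_m}(t_m)\,y_{i_1\cdots i_m}$, and apply Khinchine's inequality in the $t_1$ variable. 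This yields, pointwise in $(t_2,\ldots,t_m)$,
\[A_p^{p}\Bigl(\sum_{i_1}|c_{i_1}|^{2}\Bigr)^{p/2}\leq\int_{I}|F|^{p}\,dt_1\leq B_p^{p}\Bigl(\sum_{i_1}|c_{i_1}|^{2}\Bigr)^{p/2};\]
Fubini then reduces the proof to two-sided control of $\int_{I^{m-1}}\bigl(\sum_{i_1}|c_{i_1}|^{2}\bigr)^{p/2}$ in terms of $\bigl(\sum_{i_1,\ldots,i_m}|y_{i_1\cdots i_m}|^{2}\bigr)^{p/2}$.

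For the upper bound with $p\geq 2$, the exponent $p/2\geq 1$ allows me to invoke the generalized Minkowski inequality, $\bigl\|\sum_{i_1}|c_{i_1}|^{2}\bigr\|_{p/2}\leq\sum_{i_1}\||c_{i_1}|^{2}\|_{p/2}=\sum_{i_1}\|c_{i_1}\|_{p}^{2}$, and combine it with the inductive hypothesis applied to each $c_{i_1}$; this yields $K_{m,p}\leq B_p K_{m-1,p}$, hence $K_{m,p}\leq B_p^{m}$. Two further cases are immediate from the orthonormality of the products $r_{i_1}(t_1)\cdots r_{i_m}(t_m)$ in $L^{2}(I^m)$, which gives $\|F\|_{2}=\bigl(\sum|y|^{2}\bigr)^{1/2}$ exactly, together with the inclusions $L^{q}(I^m)\subseteq L^{p}(I^m)$ for $q\geq p$ on a probability space: the lower bound for $p\geq 2$ and the upper bound for $p\leq 2$ both hold with constant $1$.

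The remaining and genuinely delicate case is the lower bound for $p<2$, where $p/2<1$ makes the Minkowski step fail (and no useful reverse Minkowski is available). I would handle it by Lyapunov (H\"older) interpolation: fix any $q>2$ and let $\theta\in(0,1)$ be determined by $\tfrac{1}{2}=\tfrac{\theta}{p}+\tfrac{1-\theta}{q}$, so that $\|F\|_{2}\leq\|F\|_{p}^{\theta}\|F\|_{q}^{1-\theta}$. Using $\|F\|_{2}=\bigl(\sum|y|^{2}\bigr)^{1/2}$ and the upper bound at exponent $q$ just proved, $\|F\|_{q}\leq B_q^{m}\bigl(\sum|y|^{2}\bigr)^{1/2}$, rearranging supplies an admissible constant of the form $J_{m,p}=B_q^{-m(1-\theta)/\theta}>0$, closing the induction.
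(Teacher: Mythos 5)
Your proof is correct, and in three of the four cases it is essentially the paper's own argument (the paper writes this induction out for the Steinhaus analogue, Theorem \ref{d}, and the Rademacher case is verbatim the same template): induction on $m$, Khinchine in one variable, the triangle inequality in $L^{p/2}$ (equivalently Minkowski's integral inequality) for the upper bound when $p\geq 2$, and the exact $L^{2}$ identity from orthonormality combined with monotonicity of $L^{p}$-norms on a probability space for the two remaining ``easy'' cases. Where you genuinely diverge is the lower bound for $p<2$. The paper does not interpolate: it reorders the steps so that the problematic direction of Minkowski never arises. Concretely, it first applies the inductive hypothesis to $c_{i_1}$ for each fixed $i_1$ and takes $\ell^{2}$-sums over $i_1$, arriving at the mixed norm $\bigl(\sum_{i_1}\Vert c_{i_1}\Vert_{L^p}^{2}\bigr)^{1/2}$; since the outer exponent $2$ now dominates the inner exponent $p$, Minkowski's integral inequality applies in the valid direction and gives $\bigl(\sum_{i_1}\Vert c_{i_1}\Vert_{L^p}^{2}\bigr)^{1/2}\leq\bigl\Vert\bigl(\sum_{i_1}|c_{i_1}|^{2}\bigr)^{1/2}\bigr\Vert_{L^p}$, after which Khinchine in $t_1$ and Fubini close the induction with the constant $A_p^{m}$. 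That constant is optimal (Proposition \ref{e} and \cite{ns}), and this sharpness is exactly what the paper needs downstream in Proposition \ref{multiracsam} and in establishing (\ref{a12}). Your Lyapunov interpolation $\Vert F\Vert_{2}\leq\Vert F\Vert_{p}^{\theta}\Vert F\Vert_{q}^{1-\theta}$ is a valid and somewhat more robust way to produce \emph{a} positive constant --- it uses only the exact $L^{2}$ norm and the upper bound at a single exponent $q>2$ --- but it yields the non-explicit, strictly suboptimal constant $B_q^{-m(1-\theta)/\theta}$, which would not suffice for the optimality computations later in the paper. (Incidentally, for $p<2$ any admissible lower constant is necessarily $\leq 1$, so the ``$\geq 1$'' in the statement must be read as ``$>0$''; this affects your proof and the paper's equally.)
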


The final solution giving the optimal constant $J_{m,p}$ was obtained in
\cite[2019]{ns}:%
\begin{equation}
J_{m,p}=\left(  A_{p}\right)  ^{m} \label{nuse}%
\end{equation}
for all $m\in\mathbb{N}$ and for all $0<p<\infty$, where $A_{p}$ is given in
(\ref{65}). We are interested in the version of the multiple Khinchine
inequality for Steinhaus variables. Again, for the sake of simplicity, we
write
\begin{align}
&  \mathbb{E}_{m}\left\vert \sum_{n_{1},\ldots,n_{m}=1}^{N}a_{n_{1}\ldots
n_{m}}\varepsilon_{n_{1}}^{(1)}\cdots\varepsilon_{n_{m}}^{(m)}\right\vert
^{p}=\label{multiple}\\
&  \left(  \frac{1}{2\pi}\right)  ^{Nm}\int_{0}^{2\pi}\ldots\int_{0}^{2\pi
}\left\vert \sum_{n_{1},\ldots,n_{m}=1}^{N}a_{n_{1}\ldots n_{m}}e^{it_{n_{1}%
}^{\left(  1\right)  }}\cdot\ldots\cdot e^{it_{n_{m}}^{\left(  m\right)  }%
}\right\vert ^{p}dt_{n_{1}}^{\left(  1\right)  }\ldots dt_{n_{m}}^{\left(
m\right)  },\nonumber
\end{align}
and the multiple Khinchine inequality reads as follows:

\begin{theorem}
[Multiple Khinchine inequality for Steinhaus variables]\label{d} Let
$0<p<\infty$, $m\geq1$, and let $(a_{i_{1},\ldots,i_{m}})_{i_{1},\ldots
,i_{m}=1}^{N}$ be an array of scalars. There are constants $R_{m,p}%
,S_{m,p}\geq1$, such that
\begin{equation}
S_{m,p}\left(  \sum\limits_{i_{1}\ldots i_{m}=1}^{N}\left\vert a_{i_{1}%
,\ldots,i_{m}}\right\vert ^{2}\right)  ^{1/2}\leq\left(  \mathbb{E}%
_{m}\left\vert \sum_{i_{1},\ldots,i_{m}=1}^{N}a_{i_{1}\ldots i_{m}}%
\varepsilon_{i_{1}}^{(1)}\cdots\varepsilon_{i_{m}}^{(m)}\right\vert
^{p}\right)  ^{\frac{1}{p}}\leq R_{m,p}\left(  \sum\limits_{i_{1},\ldots
,i_{m}=1}^{N}\left\vert a_{i_{1}\ldots i_{m}}\right\vert ^{2}\right)  ^{1/2},
\label{pp}%
\end{equation}
for all $N\in\mathbb{N}$, all $j\in\left\{  1,...,m\right\}  $ and $i_{j}%
\in\left\{  1,...,N\right\}  $. Moreover, $S_{m,p}\geq\left(  \widetilde
{A_{p}}\right)  ^{m}$ and $R_{m,p}\leq\left(  \widetilde{B_{p}}\right)  ^{m}$.
\end{theorem}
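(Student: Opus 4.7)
The plan is to prove this by induction on $m$, with the base case $m=1$ being precisely Theorem \ref{stein}. For the inductive step I would fix the first $m-1$ Steinhaus families and collapse the sum as
\[
S := \sum_{i_1,\ldots,i_m=1}^{N} a_{i_1\ldots i_m}\,\varepsilon^{(1)}_{i_1}\cdots\varepsilon^{(m)}_{i_m} \;=\; \sum_{i_m=1}^{N} b_{i_m}\,\varepsilon^{(m)}_{i_m},
\]
where $b_{i_m} := \sum_{i_1,\ldots,i_{m-1}} a_{i_1\ldots i_m}\varepsilon^{(1)}_{i_1}\cdots\varepsilon^{(m-1)}_{i_{m-1}}$ depends only on $\varepsilon^{(1)},\ldots,\varepsilon^{(m-1)}$. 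Using Fubini to integrate in $\varepsilon^{(m)}$ first, with the scalars $b_{i_m}$ frozen, Theorem \ref{stein} gives pointwise
\[
\widetilde{A_p}^{\,p}\Big(\sum_{i_m}|b_{i_m}|^2\Big)^{p/2}\;\leq\;\mathbb{E}^{(m)}|S|^p\;\leq\;\widetilde{B_p}^{\,p}\Big(\sum_{i_m}|b_{i_m}|^2\Big)^{p/2};
\]
taking the outer expectation $\mathbb{E}_{m-1}$ reduces the problem to comparing $\mathbb{E}_{m-1}\bigl(\sum_{i_m}|b_{i_m}|^2\bigr)^{p/2}$ with $\sum_{i_m}(\mathbb{E}_{m-1}|b_{i_m}|^p)^{2/p}$.

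That comparison is exactly Minkowski's integral inequality with exponent $p/2$, whose direction depends on whether $p$ is above or below $2$. When $p\geq 2$, Minkowski with exponent $p/2\geq 1$ yields
\[
\mathbb{E}_{m-1}\!\Big(\sum_{i_m}|b_{i_m}|^2\Big)^{p/2}\;\leq\;\Big(\sum_{i_m}\bigl(\mathbb{E}_{m-1}|b_{i_m}|^p\bigr)^{2/p}\Big)^{p/2},
\]
and plugging the inductive upper bound $\mathbb{E}_{m-1}|b_{i_m}|^p\leq(\widetilde{B_p})^{p(m-1)}\bigl(\sum_{i_1,\ldots,i_{m-1}}|a_{i_1\ldots i_m}|^2\bigr)^{p/2}$ into the right-hand side and telescoping the constants delivers $R_{m,p}\leq(\widetilde{B_p})^m$. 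When $p\leq 2$, the reverse form of Minkowski's integral inequality (exponent $p/2\leq 1$) flips the comparison, and combined with the inductive lower bound it produces $S_{m,p}\geq(\widetilde{A_p})^m$.

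The two remaining sub-cases are handled by orthogonality alone. For $p\leq 2$, Jensen gives $\|S\|_p\leq\|S\|_2$, and independence plus the fact that the Steinhaus variables are an orthonormal system in $L^2$ yields $\mathbb{E}_m|S|^2=\sum|a_{i_1\ldots i_m}|^2$, so the upper bound holds with constant $1=(\widetilde{B_p})^m$; the symmetric argument for $p\geq 2$ uses $\|S\|_p\geq\|S\|_2$ and gives the trivial lower constant $1=(\widetilde{A_p})^m$. The only real obstacle I foresee is the bookkeeping: one must match the direction of Minkowski's integral inequality (standard for $p/2\geq 1$, reversed for $p/2\leq 1$) with the side of the multiple Khinchine inequality being proved. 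Once this matching is sorted, the induction runs cleanly, requiring no probabilistic input beyond the one-dimensional Theorem \ref{stein}.
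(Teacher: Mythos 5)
Your proof is correct and takes essentially the same route as the paper's: induction on $m$ using the one-dimensional Steinhaus--Khinchine inequality, Minkowski's integral inequality in the direction dictated by whether $p\geq 2$ or $p\leq 2$, Fubini, and then $L^{2}$-orthogonality together with the monotonicity of $L^{p}$-norms for the two remaining sub-cases. The only cosmetic differences are that you peel off the index $i_{m}$ and apply the one-dimensional inequality before the inductive hypothesis, whereas the paper peels off $i_{1}$ and applies the inductive hypothesis first.
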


The above inequalities are folklore. The inequality in the left hand plays a
crucial role to improve the estimates for the constants in the
Bohnenblust--Hille inequality for complex scalars (see \cite{bayart}). For the
sake of completeness, we give an elementary proof:

\begin{proof}
The case $m=1$ is exactly the Khinchine inequality for Steinhaus variables.
Let us start the proof, by induction, in a first time for the inequality in
the left hand and the case $0<p\leq2$. Assume inductively the result holds for
$m-1$, then%

\begin{align}
&  \left(  \widetilde{A_{p}}\right)  ^{\left(  m-1\right)  }\left(
\sum_{i_{1},...,i_{m}=1}^{N}\left\vert a_{i_{1}...i_{m}}\right\vert
^{2}\right)  ^{\frac{1}{2}}\nonumber\\
&  =\left(  \widetilde{A_{p}}\right)  ^{\left(  m-1\right)  }\left(
\sum_{i_{1}=1}^{N}\left(  \sum_{i_{2},...,i_{m}=1}^{N}\left\vert
a_{i_{1}...i_{m}}\right\vert ^{2}\right)  ^{\frac{1}{2}2}\right)  ^{\frac
{1}{2}}\label{mink1}\\
&  \leq\left(  \sum_{i_{1}=1}^{N}\left(  \mathbb{E}_{m-1}\left\vert
\sum_{i_{2},\ldots,i_{m}=1}^{N}a_{i_{1}\ldots i_{m}}\varepsilon_{i_{2}}%
^{(2)}\cdots\varepsilon_{i_{m}}^{(m)}\right\vert ^{p}\right)  ^{\frac{2}{p}%
}\right)  ^{\frac{1}{2}}.\nonumber
\end{align}

Since $\frac{2}{p}\geq1$, using the Minkowski integral inequality we get
\begin{align}
&  \left(  \sum_{i_{1}=1}^{N}\left(  \mathbb{E}_{m-1}\left\vert \sum
_{i_{2},\ldots,i_{m}=1}^{N}a_{i_{1}\ldots i_{m}}\varepsilon_{i_{2}}%
^{(2)}\cdots\varepsilon_{i_{m}}^{(m)}\right\vert ^{p}\right)  ^{\frac{2}{p}%
}\right)  ^{\frac{1}{2}}\label{mink2}\\
&  \leq\left[  \mathbb{E}_{m-1}\left(  \sum_{i_{1}=1}^{N}\left\vert
\sum_{i_{2},\ldots,i_{m}=1}^{N}a_{i_{1}\ldots i_{m}}\varepsilon_{i_{2}}%
^{(2)}\cdots\varepsilon_{i_{m}}^{(m)}\right\vert ^{p\frac{2}{p}}\right)
^{\frac{p}{2}}\right]  ^{\frac{1}{p}}.\nonumber
\end{align}
From (\ref{mink1}) and (\ref{mink2}), we have
\begin{align}
&  \left(  \widetilde{A_{p}}\right)  ^{\left(  m-1\right)  }\left(
\sum_{i_{1}=1}^{N}\left(  \sum_{i_{2},...,i_{m}=1}^{N}\left\vert
a_{i_{1}...i_{m}}\right\vert ^{2}\right)  ^{\frac{1}{2}2}\right)  ^{\frac
{1}{2}}\label{k-1}\\
&  \leq\left[  \mathbb{E}_{m-1}\left(  \left(  \sum_{{i_{1}}=1}^{N}\left\vert
\sum_{i_{2},\ldots,i_{m}=1}^{N}a_{i_{1}\ldots i_{m}}\varepsilon_{i_{2}}%
^{(2)}\cdots\varepsilon_{i_{m}}^{(m)}\right\vert ^{2}\right)  ^{\frac{1}{2}%
}\right)  ^{p}\right]  ^{\frac{1}{p}}.\nonumber
\end{align}
Now, applying the Khinchine inequality we get
\begin{align}
&  \widetilde{A_{p}}\left(  \sum_{i_{1}=1}^{N}\left\vert \sum_{i_{2}%
,\ldots,i_{m}=1}^{N}a_{i_{1}\ldots i_{m}}\varepsilon_{i_{2}}^{(2)}%
\cdots\varepsilon_{i_{m}}^{(m)}\right\vert ^{2}\right)  ^{\frac{1}{2}%
}\label{um}\\
&  \leq\left(  \mathbb{E}_{1}\left\vert \sum_{i_{1},...,i_{m}=1}^{N}%
a_{i_{1}\ldots i_{m}}\varepsilon_{i_{1}}^{(1)}\cdots\varepsilon_{i_{m}}%
^{(m)}\right\vert ^{p}\right)  ^{\frac{1}{p}},\nonumber
\end{align}
thus, by (\ref{k-1}) and (\ref{um})
\begin{align*}
&  \widetilde{A_{p}}\left(  \widetilde{A_{p}}\right)  ^{\left(  m-1\right)
}\left(  \sum_{i_{1}=1}^{N}\left(  \sum_{i_{2},...,i_{m}=1}^{N}\left\vert
a_{i_{1}...i_{m}}\right\vert ^{2}\right)  ^{\frac{1}{2}2}\right)  ^{\frac
{1}{2}}\\
&  =\widetilde{A_{p}}(\widetilde{A_{p}})^{\left(  m-1\right)  }\left(
\sum_{i_{1}=1}^{N}\left\vert \sum_{i_{2},\ldots,i_{m}=1}^{N}a_{i_{1}\ldots
i_{m}}\varepsilon_{i_{2}}^{(2)}\cdots\varepsilon_{i_{m}}^{(m)}\right\vert
^{2}\right)  ^{\frac{1}{2}}\\
&  \leq\widetilde{A_{p}}\left[  \mathbb{E}_{m-1}\left(  \left(  \sum_{{i_{1}%
}=1}^{N}\left\vert \sum_{i_{2},\ldots,i_{m}=1}^{N}a_{i_{1}\ldots i_{m}%
}\varepsilon_{i_{2}}^{(2)}\cdots\varepsilon_{i_{m}}^{(m)}\right\vert
^{2}\right)  ^{\frac{1}{2}}\right)  ^{p}\right]  ^{\frac{1}{p}}\\
&  =\left[  \mathbb{E}_{m-1}\left(  \widetilde{A_{p}}\left(  \sum_{{i_{1}}%
=1}^{N}\left\vert \sum_{i_{2},\ldots,i_{m}=1}^{N}a_{i_{1}\ldots i_{m}%
}\varepsilon_{i_{2}}^{(2)}\cdots\varepsilon_{i_{m}}^{(m)}\right\vert
^{2}\right)  ^{\frac{1}{2}}\right)  ^{p}\right]  ^{\frac{1}{p}}\\
&  \leq\left[  \mathbb{E}_{m-1}\left(  \mathbb{E}_{1}\left\vert \sum
_{i_{1},...,i_{m}=1}^{N}a_{i_{1}\ldots i_{m}}\varepsilon_{i_{1}}^{(1)}%
\cdots\varepsilon_{i_{m}}^{(m)}\right\vert ^{p}\right)  ^{\frac{p}{p}}\right]
^{\frac{1}{p}}%
\end{align*}
and by Fubini's theorem
\[
(\widetilde{A_{p}})^{m}\left(  \sum_{i_{1},...,i_{m}=1}^{N}\left\vert
a_{i_{1}...i_{m}}\right\vert ^{2}\right)  ^{\frac{1}{2}}\leq\left(
\mathbb{E}_{m}\left\vert \sum_{i_{1},...,i_{m}=1}^{N}a_{i_{1}\ldots i_{m}%
}\varepsilon_{i_{1}}^{(1)}\cdots\varepsilon_{i_{m}}^{(m)}\right\vert
^{p}\right)  ^{\frac{1}{p}}.
\]
We now proceed with the proof of the right hand inequality. We first consider
the case $2\leq p$. One more time, assume inductively that the result holds
for $m-1$, then%
\begin{align}
&  \left(  \sum_{i_{1}=1}^{N}\left(  \mathbb{E}_{m-1}\left\vert \sum
_{i_{2},\ldots,i_{m}=1}^{N}a_{i_{1}\ldots i_{m}}\varepsilon_{i_{2}}%
^{(2)}\cdots\varepsilon_{i_{m}}^{(m)}\right\vert ^{p}\right)  ^{\frac{2}{p}%
}\right)  ^{\frac{1}{2}}\label{mink0}\\
&  \leq\left(  \widetilde{B_{p}}\right)  ^{\left(  m-1\right)  }\left(
\sum_{i_{1}=1}^{N}\left(  \sum_{i_{2},...,i_{m}=1}^{N}\left\vert
a_{i_{1}...i_{m}}\right\vert ^{2}\right)  ^{\frac{1}{2}2}\right)  ^{\frac
{1}{2}}\nonumber\\
&  =\left(  \widetilde{B_{p}}\right)  ^{\left(  m-1\right)  }\left(
\sum_{i_{1},...,i_{m}=1}^{N}\left\vert a_{i_{1}...i_{m}}\right\vert
^{2}\right)  ^{\frac{1}{2}}\nonumber
\end{align}

Since $\frac{2}{p}\leq1$, using the Minkowski integral inequality we get
\begin{align}
&  \left[  \mathbb{E}_{m-1}\left(  \sum_{i_{1}=1}^{N}\left\vert \sum
_{i_{2},\ldots,i_{m}=1}^{N}a_{i_{1}\ldots i_{m}}\varepsilon_{i_{2}}%
^{(2)}\cdots\varepsilon_{i_{m}}^{(m)}\right\vert ^{p\frac{2}{p}}\right)
^{\frac{p}{2}}\right]  ^{\frac{1}{p}}\label{mink3}\\
&  \leq\left(  \sum_{i_{1}=1}^{N}\left(  \mathbb{E}_{m-1}\left\vert
\sum_{i_{2},\ldots,i_{m}=1}^{N}a_{i_{1}\ldots i_{m}}\varepsilon_{i_{2}}%
^{(2)}\cdots\varepsilon_{i_{m}}^{(m)}\right\vert ^{p}\right)  ^{\frac{2}{p}%
}\right)  ^{\frac{1}{2}}.\nonumber
\end{align}
From (\ref{mink0}) and (\ref{mink3}), we have
\begin{align}
&  \left[  \mathbb{E}_{m-1}\left(  \sum_{i_{1}=1}^{N}\left\vert \sum
_{i_{2},\ldots,i_{m}=1}^{N}a_{i_{1}\ldots i_{m}}\varepsilon_{i_{2}}%
^{(2)}\cdots\varepsilon_{i_{m}}^{(m)}\right\vert ^{p\frac{2}{p}}\right)
^{\frac{p}{2}}\right]  ^{\frac{1}{p}}\label{caum}\\
&  \leq\left(  \widetilde{B_{p}}\right)  ^{\left(  m-1\right)  }\left(
\sum_{i_{1},...,i_{m}=1}^{N}\left\vert a_{i_{1}...i_{m}}\right\vert
^{2}\right)  ^{\frac{1}{2}}.\nonumber
\end{align}
Now, applying the Khinchine inequality we get
\begin{align}
&  \left(  \mathbb{E}_{1}\left\vert \sum_{i_{1},...,i_{m}=1}^{N}a_{i_{1}\ldots
i_{m}}\varepsilon_{i_{1}}^{(1)}\cdots\varepsilon_{i_{m}}^{(m)}\right\vert
^{p}\right)  ^{\frac{1}{p}}\label{un}\\
&  \leq\widetilde{B_{p}}\left(  \sum_{i_{1}=1}^{N}\left\vert \sum
_{i_{2},\ldots,i_{m}=1}^{N}a_{i_{1}\ldots i_{m}}\varepsilon_{i_{2}}%
^{(2)}\cdots\varepsilon_{i_{m}}^{(m)}\right\vert ^{2}\right)  ^{\frac{1}{2}%
}\nonumber
\end{align}
thus, from (\ref{caum}) and (\ref{un}), we have
\begin{align*}
&  \left[  \mathbb{E}_{m-1}\left(  \mathbb{E}_{1}\left\vert \sum
_{i_{1},...,i_{m}=1}^{N}a_{i_{1}\ldots i_{m}}\varepsilon_{i_{1}}^{(1)}%
\cdots\varepsilon_{i_{m}}^{(m)}\right\vert ^{p}\right)  ^{\frac{p}{p}}\right]
^{\frac{1}{p}}\\
&  \leq\left[  \mathbb{E}_{m-1}\widetilde{B_{p}}\left(  \sum_{i_{1}=1}%
^{N}\left\vert \sum_{i_{2},\ldots,i_{m}=1}^{N}a_{i_{1}\ldots i_{m}}%
\varepsilon_{i_{2}}^{(2)}\cdots\varepsilon_{i_{m}}^{(m)}\right\vert
^{2}\right)  ^{\frac{p}{2}}\right]  ^{\frac{1}{p}}\\
&  =\widetilde{B_{p}}\left[  \mathbb{E}_{m-1}\left(  \sum_{i_{1}=1}%
^{N}\left\vert \sum_{i_{2},\ldots,i_{m}=1}^{N}a_{i_{1}\ldots i_{m}}%
\varepsilon_{i_{2}}^{(2)}\cdots\varepsilon_{i_{m}}^{(m)}\right\vert
^{2}\right)  ^{\frac{p}{2}}\right]  ^{\frac{1}{p}}\\
&  \leq\widetilde{B_{p}}\left(  \widetilde{B_{p}}\right)  ^{\left(
m-1\right)  }\left(  \sum_{i_{1},...,i_{m}=1}^{N}\left\vert a_{i_{1}...i_{m}%
}\right\vert ^{2}\right)  ^{\frac{1}{2}},
\end{align*}
and by Fubini's theorem
\[
\left[  \mathbb{E}_{m}\left\vert \sum_{i_{1},...,i_{m}=1}^{N}a_{i_{1}\ldots
i_{m}}\varepsilon_{i_{1}}^{(1)}\cdots\varepsilon_{i_{m}}^{(m)}\right\vert
^{p}\right]  ^{\frac{1}{p}}\leq\left(  \widetilde{B_{p}}\right)  ^{m}\left(
\sum_{i_{1},...,i_{m}=1}^{N}\left\vert a_{i_{1}...i_{m}}\right\vert
^{2}\right)  ^{\frac{1}{2}}.
\]
On the other hand, since $\widetilde{B_{2}}=\widetilde{A_{2}}=1,$ we have
particularly proved by induction which
\[
\left(  \sum_{i_{1},...,i_{m}=1}^{N}\left\vert a_{i_{1}...i_{m}}\right\vert
^{2}\right)  ^{\frac{1}{2}}\leq\left(  \mathbb{E}_{m}\left\vert \sum
_{i_{1},...,i_{m}=1}^{N}a_{i_{1}\ldots i_{m}}\varepsilon_{i_{1}}^{(1)}%
\cdots\varepsilon_{i_{m}}^{(m)}\right\vert ^{2}\right)  ^{\frac{1}{2}}%
\leq\left(  \sum_{i_{1},...,i_{m}=1}^{N}\left\vert a_{i_{1}...i_{m}%
}\right\vert ^{2}\right)  ^{\frac{1}{2}},
\]
for all $m\geq1$, the assertions in the left hand for the case $2\leq p$, and
in the right hand for the case $0<p<2$ follow trivially from the norm (metric)
inclusion between the $L_{p}$ sets, i.e., if $0<p<q<\infty$ then
\[
\left(  \mathbb{E}_{m}\left\vert \sum_{i_{1},...,i_{m}=1}^{N}a_{i_{1}\ldots
i_{m}}\varepsilon_{i_{1}}^{(1)}\cdots\varepsilon_{i_{m}}^{(m)}\right\vert
^{p}\right)  ^{\frac{1}{p}}\leq\left(  \mathbb{E}_{m}\left\vert \sum
_{i_{1},...,i_{m}=1}^{N}a_{i_{1}\ldots i_{m}}\varepsilon_{i_{1}}^{(1)}%
\cdots\varepsilon_{i_{m}}^{(m)}\right\vert ^{q}\right)  ^{\frac{1}{q}}.
\]

Thus, the inequalities follow, for all $m\geq1$ and $0<p<\infty,$ with
estimates $S_{m,p}\geq\left(  \widetilde{A_{p}}\right)  ^{m}$ and $R_{m,p}%
\leq\left(  \widetilde{B_{p}}\right)  ^{m}$.
\end{proof}

Our goal in this section is to prove the analogue of the estimate (\ref{nuse})
for the case of Steinhaus variables. We will prove that the optimal constants
$S_{m,p}$ and $R_{m,p}$ are $(\widetilde{A_{p}})^{m}$ and $(\widetilde{B_{p}%
})^{m}$, respectively, for all $m\in\mathbb{N}$ and for all $0<p<\infty.$ We
shall show that the optimal constants can be obtained as a consequence of the
following fundamental result of K\"{o}nig:

\begin{theorem}
\cite[Theorem 1]{kon} \label{zentral} Let $p\in\left(  0,\infty\right)  $.
Then%
\begin{equation}
\left(  \mathbb{E}\left\vert \sum_{i=1}^{2}\varepsilon_{i}\right\vert
^{p}\right)  ^{\frac{1}{p}}=2\left(  \frac{\Gamma\left(  \frac{p+1}{2}\right)
}{\Gamma\left(  \frac{p+2}{2}\right)  \sqrt{\pi}}\right)  ^{\frac{1}{p}}
\label{0001}%
\end{equation}
and
\begin{equation}
\lim_{N\rightarrow\infty}\left(  \mathbb{E}\left\vert \sum_{i=1}^{N}\frac
{1}{\sqrt{N}}\varepsilon_{i}\right\vert ^{p}\right)  ^{\frac{1}{p}}=\left(
\Gamma\left(  \frac{p+2}{2}\right)  \right)  ^{\frac{1}{p}}\text{,}
\label{0004}%
\end{equation}
where $\varepsilon_{i}$ are denoting Steinhaus variables, for all $i$.
Moreover, the optimal constants $\widetilde{A_{p}}$ and\ $\widetilde{B_{p}}$
in the Khinchine inequalities for Steinhaus variables are:%
\begin{equation}
\widetilde{B_{p}}=\left(  \Gamma\left(  \frac{p+2}{2}\right)  \right)
^{\frac{1}{p}},\ \ \text{ for }2\leq p, \label{0008}%
\end{equation}
\begin{equation}
\widetilde{A_{p}}=\left(  \Gamma\left(  \frac{p+2}{2}\right)  \right)
^{\frac{1}{p}},\ \ \text{ for }0.4756\approx p_{1}\leq p<2 \label{0009}%
\end{equation}
and%
\begin{equation}
\widetilde{A_{p}}=\sqrt{2}\left(  \frac{\Gamma\left(  \frac{p+1}{2}\right)
}{\Gamma\left(  \frac{p+2}{2}\right)  \sqrt{\pi}}\right)  ^{\frac{1}{p}%
},\ \ \text{ for }0<p<p_{1}\approx0.4756 \label{0002}%
\end{equation}
where $p_{1}\in(0,1)$ is the unique real number satisfying%
\[
1=\sqrt{2}\left(  \frac{\Gamma\left(  \frac{p_{1}+1}{2}\right)  }{\sqrt{\pi
}\Gamma\left(  \frac{p_{1}}{2}+1\right)  ^{2}}\right)  ^{\frac{1}{p}}.
\]

\end{theorem}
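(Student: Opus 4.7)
The proof naturally splits into three pieces: (a) the closed form \eqref{0001} at $N=2$, (b) the asymptotic identity \eqref{0004}, and (c) the identification of the sharp constants $\widetilde{A_p}, \widetilde{B_p}$ in \eqref{0008}--\eqref{0002}. Parts (a) and (b) are short direct computations; part (c) is where all the real work sits.

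For \eqref{0001}, my plan is direct integration. Writing $\varepsilon_j = e^{it_j}$ with $t_1,t_2$ independent uniform on $[0,2\pi)$ and using translation invariance in $t_2$ via the change of variables $u = t_1 - t_2 \pmod{2\pi}$, the double integral collapses to
\[
\mathbb{E}\left|\varepsilon_1+\varepsilon_2\right|^p = \frac{1}{2\pi}\int_0^{2\pi}\left|1+e^{iu}\right|^p du.
\]
Using $|1+e^{iu}| = 2|\cos(u/2)|$ and the beta integral $\int_0^{\pi/2}\cos^p v\,dv = \frac{\sqrt{\pi}}{2}\,\Gamma(\tfrac{p+1}{2})/\Gamma(\tfrac{p+2}{2})$, the expression equals $2^p\Gamma(\tfrac{p+1}{2})/(\sqrt{\pi}\,\Gamma(\tfrac{p+2}{2}))$; taking $p$-th roots yields \eqref{0001}.

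For \eqref{0004}, I would invoke the complex central limit theorem. The real and imaginary parts of $\varepsilon_i$ have mean $0$ and variance $\tfrac12$, so $S_N := N^{-1/2}\sum_{i=1}^{N}\varepsilon_i$ converges in distribution to a standard complex Gaussian $Z = X+iY$ with $X,Y \sim \mathcal{N}(0,\tfrac12)$ independent. Then $|Z|^2$ is exponential of mean one, so $\mathbb{E}|Z|^p = \int_0^\infty r^{p/2}e^{-r}\,dr = \Gamma(\tfrac{p+2}{2})$. Convergence of $p$-th moments (rather than only distributional convergence) is obtained from uniform integrability of $|S_N|^p$, which in turn follows from a uniform bound $\mathbb{E}|S_N|^q \le (\widetilde{B_q})^q$ for any $q>p$ supplied by the already-proved upper half of Theorem \ref{stein} applied to the unit vector $(N^{-1/2},\ldots,N^{-1/2})$.

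The genuinely hard part is (c): proving that the constants in \eqref{0008}--\eqref{0002} cannot be improved. Parts (a) and (b) already exhibit configurations attaining each stated value, so the task is to upgrade these to \emph{extremal} configurations on the sphere $\Sigma := \{(a_j)_{j=1}^N : N \in \mathbb{N},\ \sum_j|a_j|^2 = 1\}$. My plan would be to exploit that $\mathbb{E}\exp(it\,\operatorname{Re}(z\varepsilon)) = J_0(t|z|)$ for a Steinhaus $\varepsilon$, so by independence the characteristic function of $S_N = \sum_j a_j\varepsilon_j$ factorizes as $\prod_j J_0(t|a_j|\cdot|z|)$, and standard integral representations express $\mathbb{E}|S_N|^p$ as a symmetric functional of $(|a_j|^2)_j$ built from a product of Bessel functions. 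A Schur-convexity/concavity analysis, with the sign of the second variation along perturbations $|a_1|^2 = \tfrac12+\eta$, $|a_2|^2 = \tfrac12-\eta$ depending on $p$, would reduce the search for extremizers to equidistributed configurations with either $N=2$ or $N\to\infty$; equating these two candidate values in the range $0<p<2$ then singles out the crossover exponent $p_1$. The main obstacle is making this dichotomy rigorous uniformly in $p\in(0,\infty)$: the convexity properties of $\log J_0$ and the sign of the relevant second variation switch across the range, and controlling the Bessel products through this transition — which is precisely what produces $p_1$ — requires delicate asymptotic and monotonicity estimates.
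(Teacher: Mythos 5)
The first thing to say is that the paper does not prove this statement at all: it is quoted verbatim, with attribution, as \cite[Theorem 1]{kon} (K\"onig, 2014), and is used as a black box in the proof of Proposition \ref{e}. So there is no internal proof to compare yours against; the relevant question is whether your sketch would stand on its own as a proof of K\"onig's theorem. Your parts (a) and (b) are correct and essentially complete: the reduction of $\mathbb{E}|\varepsilon_1+\varepsilon_2|^p$ to $\frac{2^p}{2\pi}\int_0^{2\pi}|\cos(u/2)|^p\,du$ and the beta integral give \eqref{0001}, and the central limit theorem plus uniform integrability (supplied by the finite $q$-th moment bound for some $q>p$ from the upper Khinchine inequality) gives \eqref{0004}. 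These two computations also correctly show that the claimed values of $\widetilde{A_p}$ and $\widetilde{B_p}$ are \emph{upper} bounds for the best lower constant and \emph{lower} bounds for the best upper constant, respectively.

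The genuine gap is part (c), which is the entire content of the theorem beyond those two computations: you must show that \emph{no} unit vector $(a_j)$ does worse, i.e.\ that $\inf_{\|a\|_2=1}\left(\mathbb{E}|\sum_j a_j\varepsilon_j|^p\right)^{1/p}$ is attained (in the limit) at one of the two configurations from (a) and (b), and likewise for the supremum when $p>2$. Your plan --- factorize the characteristic function as $\prod_j J_0(t|a_j|)$, express $\mathbb{E}|S_N|^p$ as a symmetric functional of $(|a_j|^2)$, and run a Schur-convexity/second-variation argument to force equidistribution and reduce to $N=2$ versus $N\to\infty$ --- is indeed the strategy of K\"onig and K\"onig--Kwapie\'n, but you have only named it, not executed it. The hard steps you defer (the sign of the second variation across the full range of $p$, the monotonicity in $N$ of the equidistributed values, and the control of the crossover producing $p_1$) are precisely where the difficulty lives; K\"onig's treatment of them occupies a substantial part of \cite{kon} and involves delicate negative-order moment and Bessel-function estimates that do not follow from soft convexity considerations. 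As written, part (c) is a research programme rather than a proof, so the proposal does not establish \eqref{0008}, \eqref{0009}, or \eqref{0002}. Given that the paper itself treats this result as external input, the honest options are either to cite \cite{kon} as the paper does, or to carry out the extremal analysis in full.
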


Borrowing ideas from \cite{ns},we can now prove the following:

\begin{proposition}
\label{e}For all $0<p<\infty$ and $m\in\mathbb{N}$, the optimal constants
$S_{m,p}$ , and $R_{m,p}$ in \eqref{pp} are $\left(  \widetilde{A_{p}}\right)
^{m}$, and $\left(  \widetilde{B_{p}}\right)  ^{m}$ respectively.
\end{proposition}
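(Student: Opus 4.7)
The plan is as follows. Theorem \ref{d} already established $S_{m,p}\ge (\widetilde{A_p})^m$ and $R_{m,p}\le (\widetilde{B_p})^m$, so all that remains for Proposition \ref{e} is the matching optimality: $S_{m,p}\le (\widetilde{A_p})^m$ and $R_{m,p}\ge (\widetilde{B_p})^m$. I would obtain both by tensorising sharp one-dimensional witnesses provided by K\"{o}nig's Theorem \ref{zentral}.

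The key ingredient is a multiplicative tensor-product identity. Given one-dimensional scalar sequences $(a_i^{(j)})_{i=1}^{N_j}$ for $j=1,\ldots,m$, set
\[
a_{i_1\ldots i_m}:=a_{i_1}^{(1)}a_{i_2}^{(2)}\cdots a_{i_m}^{(m)}.
\]
Because the Steinhaus variables $\varepsilon_{i_j}^{(j)}$ are mutually independent across $j$, the $m$-fold expectation factorises as
\[
\mathbb{E}_m\Bigl|\sum_{i_1,\ldots,i_m} a_{i_1\ldots i_m}\varepsilon_{i_1}^{(1)}\cdots\varepsilon_{i_m}^{(m)}\Bigr|^{p}=\prod_{j=1}^m \mathbb{E}\Bigl|\sum_{i=1}^{N_j} a_i^{(j)}\varepsilon_i^{(j)}\Bigr|^p,
\]
while trivially
\[
\Bigl(\sum_{i_1,\ldots,i_m}|a_{i_1\ldots i_m}|^2\Bigr)^{1/2}=\prod_{j=1}^m\Bigl(\sum_{i=1}^{N_j}|a_i^{(j)}|^2\Bigr)^{1/2}.
\]
Thus on a rank-one array the $m$-dimensional Steinhaus-to-$\ell_2$ ratio is exactly the product of the $m$ one-dimensional ratios.

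Next I would insert one-dimensional sequences that realise, possibly in the limit, the sharp values of $\widetilde{A_p}$ and $\widetilde{B_p}$ given by Theorem \ref{zentral}. For $R_{m,p}\ge (\widetilde{B_p})^m$: when $p\ge 2$, identity \eqref{0004} produces a sequence $a_i=1/\sqrt{N}$ whose ratio tends to $\Gamma((p+2)/2)^{1/p}=\widetilde{B_p}$ as $N\to\infty$; when $0<p<2$, the value $\widetilde{B_p}=1$ is trivially attained by $a_1=1$. For $S_{m,p}\le (\widetilde{A_p})^m$: when $p_1\le p<2$ the same sequence $a_i=1/\sqrt{N}$ yields, by \eqref{0004}, ratios tending to $\widetilde{A_p}$; when $p\ge 2$ one has $\widetilde{A_p}=1$, attained by $a_1=1$; when $0<p<p_1$, identity \eqref{0001} shows that $a_1=a_2=1$ realises the ratio exactly as $\sqrt{2}(\Gamma((p+1)/2)/(\Gamma((p+2)/2)\sqrt{\pi}))^{1/p}=\widetilde{A_p}$. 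Tensorising, the multiplicativity identity produces, for every $\varepsilon>0$, $m$-arrays whose ratios approximate $(\widetilde{A_p})^m$ from above and $(\widetilde{B_p})^m$ from below within $\varepsilon$, forcing the desired optimality.

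The main obstacle is essentially bookkeeping: the piecewise formulas for $\widetilde{A_p}$ and $\widetilde{B_p}$ in Theorem \ref{zentral} force a small case split according to the range of $p$, and one has to check that the limit $N\to\infty$ of the one-dimensional ratio passes cleanly through a finite product of independent factors. Both points are resolved by the multiplicativity identity above, which reduces the $m$-dimensional extremal problem to $m$ independent copies of the one-dimensional problem already settled by K\"{o}nig.
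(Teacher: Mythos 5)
Your proposal is correct and follows essentially the same route as the paper: both establish the remaining inequalities $S_{m,p}\leq(\widetilde{A_p})^m$ and $R_{m,p}\geq(\widetilde{B_p})^m$ by testing the multiple inequality on rank-one (tensor-product) arrays, exploiting the factorization of the $m$-fold Steinhaus expectation, and plugging in exactly the same one-dimensional witnesses from K\"onig's theorem with the same case split in $p$ (the array $(1,1)$ for $0<p<p_1$ via \eqref{0001}, the uniform array $1/\sqrt{N}$ with $N\to\infty$ via \eqref{0004} for $p_1\leq p<2$ and for $p>2$, and the trivial cases where the constant equals $1$).
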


\begin{proof}
From Theorem \ref{multikhin} we already know that $S_{m,p}\geq\left(
\widetilde{A_{p}}\right)  ^{m}$ and $R_{m,p}\leq\left(  \widetilde{B_{p}%
}\right)  ^{m}$ for all $0<p<\infty$ and $m\in\mathbb{N}$, so we only need to
check the other inequalities; i.e. $S_{m,p}\leq\left(  \widetilde{A_{p}%
}\right)  ^{m}$ and $R_{m,p}\geq\left(  \widetilde{B_{p}}\right)  ^{m}$. Let
us start with the estimate concerning the constant $S_{m,p}$.

Obviously $S_{m,p}\leq\left(  \widetilde{A_{p}}\right)  ^{m}=\left(  1\right)
^{m}=1,$ for all $2\leq p<\infty$. Let us prove the case $0<p<2$. We begin by
considering $0<p<p_{1}$, where $p_{1}$ is the number defined in the previous
lemma. Define
\begin{equation}
a_{i_{1}...i_{m}}=%
\begin{cases}
1, & i_{1},\ldots,i_{m}\in\left\{  1,2\right\} \\
0, & i_{l}\in\left\{  3,\ldots,N\right\}  \text{ for some }l.
\end{cases}
\label{menor}%
\end{equation}
Then,
\[
\left(  \sum_{i_{1},...,i_{m}=1}^{N}\left\vert a_{i_{1}...i_{m}}\right\vert
^{2}\right)  ^{\frac{1}{2}}=2^{\frac{m}{2}}.
\]
On the other hand, we have
\[
\left\vert \sum_{i_{1},\ldots,i_{m}=1}^{N}a_{i_{1},\ldots,i_{m}}%
\varepsilon_{i_{1}}^{(1)}\ldots\varepsilon_{i_{m}}^{(m)}\right\vert
^{p}=\left\vert \sum_{i_{1},\ldots,i_{m}=1}^{2}\varepsilon_{i_{1}}^{(1)}%
\ldots\varepsilon_{i_{m}}^{(m)}\right\vert ^{p}=\prod_{j=1}^{m}\left\vert
\sum_{i_{j}=1}^{2}\varepsilon_{i_{j}}^{(j)}\right\vert ^{p}.
\]
Hence
\begin{align*}
&  \left(  \mathbb{E}_{m}\left\vert \sum_{i_{1},\ldots,i_{m}=1}^{N}%
a_{i_{1},\ldots,i_{m}}\varepsilon_{i_{1}}^{(1)}\ldots\varepsilon_{i_{m}}%
^{(m)}\right\vert ^{p}\right)  ^{\frac{1}{p}}\\
&  =\left(  \mathbb{E}_{m}\prod_{j=1}^{m}\left\vert \sum_{i_{j}=1}%
^{2}\varepsilon_{i_{j}}^{(j)}\right\vert ^{p}\right)  ^{\frac{1}{p}}\\
&  =\left(  \prod_{j=1}^{m}\mathbb{E}_{m}\left\vert \sum_{i_{j}=1}%
^{2}\varepsilon_{i_{j}}^{(j)}\right\vert ^{p}\right)  ^{\frac{1}{p}}\\
&  =\left(  \prod_{j=1}^{m}\mathbb{E}\left\vert \sum_{i_{j}=1}^{2}%
\varepsilon_{i_{j}}^{(j)}\right\vert ^{p}\right)  ^{\frac{1}{p}}\\
&  =\left[  \mathbb{E}\left\vert \sum_{i=1}^{2}\varepsilon_{i}\right\vert
^{p}\right]  ^{\frac{m}{p}}.
\end{align*}
The previous equality combined with (\ref{0001}) and (\ref{0002}) provides%
\[
\left(  \mathbb{E}_{m}\left\vert \sum_{i_{1},\ldots,i_{m}=1}^{N}%
a_{i_{1},\ldots,i_{m}}\varepsilon_{i_{1}}^{(1)}\ldots\varepsilon_{i_{m}}%
^{(m)}\right\vert ^{p}\right)  ^{\frac{1}{p}}=\left[  \mathbb{E}\left\vert
\sum_{i=1}^{2}\varepsilon_{i}\right\vert ^{p}\right]  ^{\frac{m}{p}}=\left(
2^{\frac{1}{2}}\widetilde{A_{p}}\right)  ^{m}.
\]
Hence
\[
S_{m,r}\leq\frac{\left(  2^{\frac{1}{2}}\widetilde{A_{p}}\right)  ^{m}%
}{2^{\frac{m}{2}}}=\left(  \widetilde{A_{p}}\right)  ^{m}.
\]
i.e., the assertion is proved for $0<p<p_{1}$ and $m\in\mathbb{N}$.

On the other hand, let $p\in\lbrack p_{1},2)$ and $m\in\mathbb{N}$. For each
positive integer $N$, consider
\[
a_{i_{1}...i_{m}}=\frac{1}{N^{\frac{m}{2}}},\ \ \ \mbox{ for all }i_{1}%
,\ldots,i_{m}\in\left\{  1,N\right\}  .
\]
We have
\[
\left(  \sum_{i_{1},...,i_{m}=1}^{N}\left\vert a_{i_{1}...i_{m}}\right\vert
^{2}\right)  ^{\frac{1}{2}}=\left(  \sum_{i_{1},...,i_{m}=1}^{N}\frac{1}%
{N^{m}}\right)  ^{\frac{1}{2}}=1,
\]
and, at the same time, we have
\[
\left\vert \sum_{i_{1},...,i_{m}=1}^{N}a_{i_{1},\ldots,i_{m}}\varepsilon
_{i_{1}}^{(1)}\ldots\varepsilon_{i_{m}}^{(m)}\right\vert ^{p}=\prod_{j=1}%
^{m}\left\vert \sum_{i_{j}=1}^{N}\frac{1}{\sqrt{N}}\varepsilon_{i_{j}}%
^{(j)}\right\vert ^{p}.
\]
Then,
\begin{align*}
&  \left(  \mathbb{E}_{m}\left\vert \sum_{i_{1},...,i_{m}=1}^{N}%
a_{i_{1},\ldots,i_{m}}\varepsilon_{i_{1}}^{(1)}\ldots\varepsilon_{i_{m}}%
^{(m)}\right\vert ^{p}\right)  ^{\frac{1}{p}}\\
&  =\left(  \mathbb{E}_{m}\prod_{j=1}^{m}\left\vert \sum_{i_{j}=1}^{N}\frac
{1}{\sqrt{N}}\varepsilon_{i_{j}}^{(j)}\right\vert ^{p}\right)  ^{\frac{1}{p}%
}\\
&  =\left(  \prod_{j=1}^{m}\mathbb{E}_{m}\left\vert \sum_{i_{j}=1}^{N}\frac
{1}{\sqrt{N}}\varepsilon_{i_{j}}^{(j)}\right\vert ^{p}\right)  ^{\frac{1}{p}%
}\\
&  =\left(  \prod_{j=1}^{m}\mathbb{E}\left\vert \sum_{i_{j}=1}^{N}\frac
{1}{\sqrt{N}}\varepsilon_{i_{j}}^{(j)}\right\vert ^{p}\right)  ^{\frac{1}{p}%
}\\
&  =\left(  \mathbb{E}\left\vert \sum_{i=1}^{N}\frac{1}{\sqrt{N}}%
\varepsilon_{i}\right\vert ^{p}\right)  ^{\frac{m}{p}}.
\end{align*}
Thus
\[
S_{m,p}\leq\left(  \mathbb{E}\left\vert \sum_{i=1}^{N}\frac{1}{\sqrt{N}%
}\varepsilon_{i}\right\vert ^{p}\right)  ^{\frac{m}{p}}%
\]
By (\ref{0004}) and (\ref{0009}), letting $N\rightarrow\infty$ we obtain
\[
S_{m,p}\leq\left(  \widetilde{A_{p}}\right)  ^{m}%
\]
for $p\in(p_{1},2)$ and $m\in\mathbb{N}$.

Now, we deal with the respective inequality for the constant $R_{m,p}$.
Obviously $R_{m,p}\leq\left(  \widetilde{B_{p}}\right)  ^{m}=\left(  1\right)
^{m}=1,$ for all $0<p\leq2$. Let us prove the case $2<p$. Let $p\in(2,\infty)$
and $m\in\mathbb{N}$. For each positive integer $N$, we again consider%
\[
a_{i_{1}...i_{m}}=\frac{1}{N^{\frac{m}{2}}},\ \ \ \mbox{ for all }i_{1}%
,\ldots,i_{m}\in\left\{  1,N\right\}  .
\]
We already have
\[
\left(  \sum_{i_{1},...,i_{m}=1}^{N}\left\vert a_{i_{1}...i_{m}}\right\vert
^{2}\right)  ^{\frac{1}{2}}=\left(  \sum_{i_{1},...,i_{m}=1}^{N}\frac{1}%
{N^{m}}\right)  ^{\frac{1}{2}}=1,
\]
and,
\[
\left(  \mathbb{E}_{m}\left\vert \sum_{i_{1},...,i_{m}=1}^{N}a_{i_{1}%
,\ldots,i_{m}}\varepsilon_{i_{1}}^{(1)}\ldots\varepsilon_{i_{m}}%
^{(m)}\right\vert ^{p}\right)  ^{\frac{1}{p}}=\left(  \mathbb{E}\left\vert
\sum_{i=1}^{N}\frac{1}{\sqrt{N}}\varepsilon_{i}\right\vert ^{p}\right)
^{\frac{m}{p}}.
\]
Thus
\[
R_{m,p}\geq\left(  \mathbb{E}\left\vert \sum_{i=1}^{N}\frac{1}{\sqrt{N}%
}\varepsilon_{i}\right\vert ^{p}\right)  ^{\frac{m}{p}}.
\]
By (\ref{0004}) and (\ref{0008}), letting $N\rightarrow\infty$ we obtain
\[
R_{m,p}\geq\left(  \widetilde{B_{p}}\right)  ^{m}%
\]
for $p\in(2,\infty)$ and $m\in\mathbb{N}$.
\end{proof}

\begin{remark}
Combining the ideas of this section and those from \cite{ns} we can easily
prove that the optimal constants of the right-hand-side inequality from
(\ref{ppp111}) are also
\[
K_{m,p}=\left(  B_{p}\right)  ^{m}%
\]
for all $m\in\mathbb{N}$ and $0<p<\infty$, where $B_{p}$ is as in (\ref{65}).
\end{remark}

\section{Best constants for mixed-type Littlewood inequalities}

If we look for a common thread in the "different" proofs of the
Hardy--Littlewood inequalities (\ref{q3}), we necessarily find the following
inequality, that we may call mixed Littlewood inequality:

\begin{theorem}
\cite[Theorem A]{pra}\label{pra} Assume $1\leq p_{1},\ldots,p_{m}\leq\infty$
are such that $\frac{1}{p_{1}}+\cdots+\frac{1}{p_{m}}<\frac{1}{2}$ and let
$\frac{1}{\lambda}=1-\left(  \frac{1}{p_{1}}+\cdots+\frac{1}{p_{m}}\right)  $.
Then for every continuous $m$-linear form $T:X_{p_{1}}\times\cdots\times
X_{p_{m}}\rightarrow\mathbb{K}$ we have
\[
\left(  \sum_{i_{1}=1}^{\infty}\left(  \sum_{i_{2},...,i_{m}=1}^{\infty
}\left\vert T(e_{i_{1}},\ldots,e_{i_{m}})\right\vert ^{2}\right)  ^{\lambda
/2}\right)  ^{1/\lambda}\leq\left(  \sqrt{2}\right)  ^{m-1}\Vert T\Vert\,.
\]

\end{theorem}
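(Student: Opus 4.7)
The approach follows the standard Khinchine--Minkowski--duality pattern. I would truncate the sums to length $n$ (passing to the limit by monotone convergence at the end) and, for fixed $i_1$, apply the classical Khinchine inequality (with $L_1$--$L_2$ constant $1/A_1=\sqrt{2}$) iteratively in each of the inner indices $i_2,\ldots,i_m$ to obtain
\[
\Bigl(\sum_{i_2,\ldots,i_m=1}^{n}|T(e_{i_1},e_{i_2},\ldots,e_{i_m})|^2\Bigr)^{1/2}\le(\sqrt{2})^{m-1}\int_{[0,1]^{m-1}}\bigl|T(e_{i_1},y^{(2)}(t),\ldots,y^{(m)}(t))\bigr|\,dt,
\]
where $y^{(j)}(t_j):=\sum_{i_j=1}^{n}r_{i_j}(t_j)e_{i_j}$ is the Rademacher sum in the $j$-th slot.

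Next, I would raise to the $\lambda$-power, sum over $i_1$ and take the $1/\lambda$-power, then use Minkowski's integral inequality (legitimate since $\lambda\ge 1$, because $\sum 1/p_j<1/2<1$) to exchange the outer sum with the expectation:
\[
\Bigl(\sum_{i_1=1}^{n}\bigl(\sum_{i_2,\ldots,i_m}|T(e_{i_1},\ldots,e_{i_m})|^2\bigr)^{\lambda/2}\Bigr)^{1/\lambda}\le(\sqrt{2})^{m-1}\int_{[0,1]^{m-1}}\Bigl(\sum_{i_1=1}^{n}|T(e_{i_1},y^{(2)},\ldots,y^{(m)})|^\lambda\Bigr)^{1/\lambda}dt.
\]
For a fixed Rademacher realisation the map $L(x):=T(x,y^{(2)},\ldots,y^{(m)})$ is a continuous linear form on $X_{p_1}$ with $L(e_{i_1})=T(e_{i_1},y^{(2)},\ldots,y^{(m)})$. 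Since $1/\lambda=1-\sum 1/p_j\le 1-1/p_1=1/p_1^{\ast}$, one has $\lambda\ge p_1^{\ast}$, and the $\ell_p$-inclusion combined with Riesz representation gives $\bigl(\sum_{i_1}|L(e_{i_1})|^\lambda\bigr)^{1/\lambda}\le\bigl(\sum_{i_1}|L(e_{i_1})|^{p_1^{\ast}}\bigr)^{1/p_1^{\ast}}=\|L\|_{X_{p_1}^{\ast}}$.

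\textbf{Main obstacle.} The delicate step is bounding $\|L\|$ by $\|T\|$ in a way that survives $n\to\infty$. The naive estimate $\|L\|\le\|T\|\prod_{j=2}^{m}\|y^{(j)}\|_{p_j}=\|T\|\,n^{\sum_{j\ge 2}1/p_j}$ diverges whenever some $p_j<\infty$. In the fully symmetric case $p_1=\cdots=p_m=\infty$ nothing needs cancelling: $\lambda=1$, the Rademacher vectors have unit $\ell_\infty$-norm, and the constant $(\sqrt{2})^{m-1}$ reads off at once. In the mixed case the cancellation must come from the exponent relation $1/\lambda+\sum_{j=1}^{m}1/p_j=1$: one would either replace each $y^{(j)}$ by a suitably $p_j$-normalised random vector adapted to the $j$-th slot, or insert a further H\"older-type splitting tailored to this balance, so that the divergent factors $n^{1/p_j}$ are absorbed into the outer $\ell_\lambda$-norm. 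This is the technical heart of the argument and where the hypothesis $\sum 1/p_j<1/2$ is used in full strength.
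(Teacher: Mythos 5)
First, a point of reference: the paper does not prove this statement. Theorem \ref{pra} is quoted from \cite[Theorem A]{pra}, so there is no internal proof to compare against. The closest internal argument is Proposition \ref{multiracsam}, which is exactly your skeleton --- multiple Khinchine in the inner slots, Minkowski's integral inequality, then duality in the first slot --- specialized to $p_{2}=\cdots=p_{m}=\infty$, where the random vectors $y^{(j)}$ lie in the unit ball of the relevant spaces, $\lambda=p_{1}^{\ast}$, and the argument closes. Up to that point your steps are all correct (the constant $(\sqrt{2})^{m-1}=(A_{1})^{-(m-1)}$, the use of Minkowski for $\lambda\geq1$, and the inclusion $\ell_{p_{1}^{\ast}}\subseteq\ell_{\lambda}$).

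For the general statement, however, your proposal has a genuine gap, and you have located it yourself: when some $p_{j}<\infty$ for $j\geq2$, the vectors $y^{(j)}$ have $\ell_{p_{j}}^{n}$-norm $n^{1/p_{j}}$ and the chain Khinchine $\rightarrow$ Minkowski $\rightarrow$ duality does not terminate. Neither of your suggested repairs is a proof. Normalizing $y^{(j)}$ by $n^{-1/p_{j}}$ gains nothing: the left-hand side of the Khinchine step depends only on the fixed array $(a_{i_{1}\cdots i_{m}})$, so rescaling the random vectors rescales the right-hand side by exactly the factor you are trying to remove, and it reappears as $n^{\sum_{j\geq2}1/p_{j}}$ in front. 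The ``further H\"older-type splitting'' is indeed what is needed, but it is the substance of the theorem rather than a routine insertion: the known proofs either proceed by induction on the number of finite exponents, dualizing one finite slot at a time via an $\ell_{p_{j}}$-valued (multiple-summing) version of the inequality for the remaining slots, or interpolate mixed $\ell^{r}(\ell^{s})$ norms \`{a} la Blei between the $c_{0}$-case Khinchine estimate (exponent pattern $(1,2,\ldots,2)$) and the pure duality estimates (exponent $p_{j}^{\ast}$ in the $j$-th slot); the identity $1/\lambda=1-\sum_{j}1/p_{j}$ is precisely the bookkeeping of that interpolation, and this is where the hypothesis on $\sum_{j}1/p_{j}$ actually enters. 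As written, your argument proves the theorem only in the case $p_{2}=\cdots=p_{m}=\infty$.
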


This inequality appeared for the first time for bilinear forms in 1930
\cite[Theorem 1]{Lit}, with $X_{p_{1}}=X_{p_{2}}=c_{0}$, and then in 1934
\cite[Theorem 1]{Hardy}, 1981 \cite[Theorem A]{pra}, 2016 \cite[Proposition
3.1]{dima}, 2016 \cite[Lemma 2.1]{abps}. The role of this mixed inequality, in
the proofs of the Hardy--Littlewood inequalities in the above references, is
essentially the same (this is described in Bayart's paper \cite{bayart2} in
what he calls Abstract Hardy--Littlewood Method). In fact, in these
references, the mixed inequality always was used as the starting point of the
proof of the Hardy--Littlewood inequality.
%The proof of the Hardy--Littlewood inequality consists in, beginning with the mixed inequality, obtain the classical inequality (with repeated exponents).

We recall the following particular case of the Hardy--Littlewood inequalities
(\ref{q3}). For $p\in\lbrack2,\infty]$ and $m\in\mathbb{N}$, $m\geq2$, the
mixed Littlewood-type inequality asserts that there is an optimal constant
$C_{(p^{\ast},2,..,2)}^{id,\left(  p,\infty,...,\infty\right)  \mathbb{K}}%
\geq1$ such that
\begin{equation}
\left(  \sum_{i_{1}=1}^{\infty}\left(  \sum_{i_{2},...,i_{m}=1}^{\infty
}|T(e_{i_{1}},...,e_{i_{m}})|^{2}\right)  ^{\frac{1}{2}p^{\ast}}\right)
^{\frac{1}{p^{\ast}}}\leq C_{(p^{\ast},2,..,2)}^{id,\left(  p,\infty
,...,\infty\right)  \mathbb{K}}\Vert T\Vert\label{9u}%
\end{equation}
for all continuous $m$-linear forms $T:X_{p}\times X_{\infty}\times\dots\times
X_{\infty}\rightarrow\mathbb{K}$. We recall that according to Theorem
\ref{pra} $C_{(m),p}^{\mathbb{K}}\leq\left(  \sqrt{2}\right)  ^{m-1}.$ In the
recent years, several authors (\cite{pell, racsam, ns, pt}) were working on
estimating the optimal constants in (\ref{9u}) and managed to solve the
problem for the case $\mathbb{K=R}$. In the case of complex scalars, despite
the results achieved in the real case, the optimal constant for all values of
$p$ is unknown. Using a famous inequality due to Minkowski (see \cite{garling}%
) it is simple to verify that the constant $C_{(p^{\ast},2,..,2)}^{id,\left(
p,\infty,...,\infty\right)  \mathbb{K}}$ also dominates the optimal constants
in the following inequalities:%
\begin{align}
\left(  \sum_{i_{2}=1}^{\infty}\left(  \sum_{i_{1}=1}^{\infty}\left(
\sum_{i_{3},...,i_{m}=1}^{\infty}|T(e_{i_{1}},...,e_{i_{m}})|^{2}\right)
^{\frac{1}{2}p^{\ast}}\right)  ^{\frac{1}{p^{\ast}}2}\right)  ^{1/2}  &  \leq
C_{(2,p^{\ast},2,..,2)}^{\sigma_{2},\left(  p,\infty,...,\infty\right)
\mathbb{K}}\Vert T\Vert\label{orlicz}\\
&  \vdots\nonumber\\
\left(  \sum_{i_{2},...,i_{m}=1}^{\infty}\left(  \sum_{i_{1}=1}^{\infty
}|T(e_{i_{1}},...,e_{i_{m}})|^{p^{\ast}}\right)  ^{\frac{1}{p^{\ast}}%
2}\right)  ^{1/2}  &  \leq C_{(2,...,2,p^{\ast})}^{\sigma_{m},\left(
p,\infty,...,\infty\right)  \mathbb{K}}\Vert T\Vert,\nonumber
\end{align}
where $\sigma_{j}$ are the identity maps, except for the case $\sigma
_{j}(j)=1$ and $\sigma_{j}(1)=j.$ We thus have%
\begin{equation}
C_{(2,...,2,p^{\ast})}^{\sigma_{m},\left(  p,\infty,...,\infty\right)
\mathbb{K}}\leq\cdots\leq C_{(2,p^{\ast},2,..,2)}^{\sigma_{2},\left(
p,\infty,...,\infty\right)  \mathbb{K}}\leq C_{(p^{\ast},2,..,2)}^{id,\left(
p,\infty,...,\infty\right)  \mathbb{K}}. \label{ppkk}%
\end{equation}
Now we shall show that, for all $p\in\left[  2,\infty\right]  $ all the
optimal constants are%
\begin{equation}
C_{(2,...,2,p^{\ast})}^{\sigma_{m},\left(  p,\infty,...,\infty\right)
\mathbb{K}}=\cdots=C_{(2,p^{\ast},2,..,2)}^{\sigma_{2},\left(  p,\infty
,...,\infty\right)  \mathbb{K}}=C_{(p^{\ast},2,..,2)}^{id,\left(
p,\infty,...,\infty\right)  \mathbb{K}}=\left(  \widetilde{A_{p^{\ast}}%
}\right)  ^{-\left(  m-1\right)  }, \label{a12}%
\end{equation}
where the notation is as in the Khinchine inequality for Steinhaus variables
(Theorem \ref{stein}). Note that when $p=\infty,$ for reasons of symmetry, we
recover (iii) for all bijections $\sigma$.

We start with the following proposition, showing that $C_{(p^{\ast}%
,2,..,2)}^{id,\left(  p,\infty,...,\infty\right)  \mathbb{C}}\leq\left(
\widetilde{A_{p^{\ast}}}\right)  ^{-\left(  m-1\right)  }$. This estimate is
somewhat new; for real scalars, in \cite[Theorem 2]{racsam} it was proved that
$C_{(p^{\ast},2,..,2)}^{id,\left(  p,\infty,...,\infty\right)  \mathbb{R}}%
\leq\left(  A_{p^{\ast}}\right)  ^{-\left(  m-1\right)  }$ but for complex
scalars the only known estimate was the one given by Theorem \ref{pra}.
However, the proof is simple and follows the lines of the proof of
\cite[Theorem 2]{racsam}.

\begin{proposition}
[Multilinear mixed $\left(  \ell_{\frac{p}{p-1}},\ell_{2}\right)  $-Littlewood
inequality]\label{multiracsam} For all positive integers $m\geq2$ we have
\[
C_{(p^{\ast},2,..,2)}^{id,\left(  p,\infty,...,\infty\right)  \mathbb{R}}%
\leq\left(  S_{m-1,p}\right)  ^{-1}.
\]

\end{proposition}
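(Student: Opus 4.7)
The plan is to adapt to complex scalars the real-scalar argument of \cite[Theorem 2]{racsam}, replacing Rademacher variables by Steinhaus variables and invoking the sharp constants in the multiple Khinchine inequality for Steinhaus variables (Proposition \ref{e}). Fix a continuous $m$-linear form $T:X_{p}\times X_{\infty}\times\cdots\times X_{\infty}\to\mathbb{R}$ and a positive integer $N$; by a standard finite-sections argument it suffices to prove the inequality when $T$ acts on $\ell_{p}^{N}\times(\ell_{\infty}^{N})^{m-1}$. For each fixed index $i_1\in\{1,\ldots,N\}$, view $\bigl(T(e_{i_1},e_{i_2},\ldots,e_{i_m})\bigr)_{i_2,\ldots,i_m=1}^{N}$ as an $(m-1)$-indexed array of scalars; applying the left-hand inequality in Proposition \ref{e} and using multilinearity to absorb the Steinhaus combinations inside $T$ yields
\[
S_{m-1,p}\Bigl(\sum_{i_2,\ldots,i_m=1}^{N}|T(e_{i_1},\ldots,e_{i_m})|^{2}\Bigr)^{1/2}\leq\Bigl(\mathbb{E}_{m-1}\bigl|T(e_{i_1},y_2,\ldots,y_m)\bigr|^{p^{*}}\Bigr)^{1/p^{*}},
\]
where $y_{j}:=\sum_{i_j=1}^{N}\varepsilon_{i_j}^{(j)}e_{i_j}$ has unit $\ell_\infty$-norm for every realization (since $|\varepsilon_{i_j}^{(j)}|=1$) and the Khinchine exponent is aligned with the outer exponent $p^{*}$ of the target inequality.

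Raising to the power $p^{*}$, summing over $i_1\in\{1,\ldots,N\}$, and interchanging the $i_1$-sum with the $\mathbb{E}_{m-1}$-expectation by Fubini produces the right-hand side $\mathbb{E}_{m-1}\sum_{i_1=1}^{N}|T(e_{i_1},y_2,\ldots,y_m)|^{p^{*}}$. For every realization of the Steinhaus variables the linear functional $x\mapsto T(x,y_2,\ldots,y_m)$ on $\ell_{p}^{N}$ has operator norm at most $\|T\|\prod_{j=2}^{m}\|y_j\|_\infty=\|T\|$, so by the isometric identification $(\ell_{p}^{N})^{*}=\ell_{p^{*}}^{N}$,
\[
\sum_{i_1=1}^{N}\bigl|T(e_{i_1},y_2,\ldots,y_m)\bigr|^{p^{*}}\leq\|T\|^{p^{*}}
\]
pointwise in $\varepsilon$. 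Taking expectations, combining with the previous display, and extracting the $p^{*}$-th root produces the bound $(S_{m-1,p})^{-1}\|T\|$; letting $N\to\infty$ settles the infinite-dimensional claim.

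No step is individually difficult; the delicate point is the alignment of the Khinchine exponent with the outer exponent $p^{*}$, which keeps both the Fubini interchange and the subsequent $\ell_{p}$--$\ell_{p^{*}}$ duality step sharp and lossless, any other choice forcing a lossy Minkowski integral-inequality correction. The real-scalar hypothesis on $T$ creates no structural obstacle, since the Steinhaus Khinchine inequality is valid for arbitrary complex (and in particular real) scalar arrays, and the linear functional appearing in the duality step is defined pointwise for each realization of the Steinhaus variables.
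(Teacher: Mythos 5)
Your proof is correct and follows essentially the same route as the paper: apply the $(m-1)$-fold Steinhaus Khinchine inequality with exponent $p^{\ast}$ for each fixed $i_1$, interchange the $i_1$-sum with the expectation, and bound the resulting $\ell_{p^{\ast}}$-norm of $\bigl(T(e_{i_1},y_2,\ldots,y_m)\bigr)_{i_1}$ by $\Vert T\Vert$ via the duality $(\ell_p^N)^{\ast}=\ell_{p^{\ast}}^N$ and $\Vert y_j\Vert_\infty=1$. The only (shared, harmless) quirk is notational: since the Khinchine exponent is $p^{\ast}$, the constant is really $S_{m-1,p^{\ast}}$, and the target space should read $\mathbb{C}$, as in the paper's own proof.
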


\begin{proof}
Let $N$ be a positive integer and $T:X_{p}^{N}\times X_{\infty}^{N}\times
\dots\times X_{\infty}^{N}\rightarrow\mathbb{C}$ be a continuous $m$-linear
form. By Theorem \ref{d} we know that%

\begin{align*}
&  \left(  \sum_{i_{1}=1}^{N}\left(  \sum_{i_{2},...,i_{m}=1}^{N}|T(e_{i_{1}%
},...,e_{i_{m}})|^{2}\right)  ^{\frac{1}{2}\times p^{\ast}}\right)  ^{\frac
{1}{p^{\ast}}}\\
&  \quad\leq\left(  S_{m-1,p}\right)  ^{-1}\left(  \sum_{i_{1}=1}%
^{N}\mathbb{E}_{m-1}\left\vert \sum_{i_{2},...,i_{m}}^{N}\varepsilon_{i_{2}%
}^{(2)}\cdots\varepsilon_{i_{m}}^{(m)}T(e_{i_{1}},...,e_{i_{m}})\right\vert
^{p^{\ast}}\right)  ^{\frac{1}{p^{\ast}}}\\
&  =\left(  S_{m-1,p}\right)  ^{-1}\left(  \mathbb{E}_{m-1}\sum_{i_{1}=1}%
^{N}\left\vert T(e_{i_{1}},\sum_{i_{2}}^{N}\varepsilon_{i_{2}}^{(2)}e_{i_{2}%
},...,\sum_{i_{m}}^{N}\varepsilon_{i_{m}}^{(m)}e_{i_{m}})\right\vert
^{p^{\ast}}\right)  ^{\frac{1}{p^{\ast}}}\\
&  \leq\left(  S_{m-1,p}\right)  ^{-1}\left(  \mathbb{E}_{m-1}\left\Vert
T\left(  \cdot,\sum_{i_{2}}^{N}\varepsilon_{i_{2}}^{(2)}e_{i_{2}}%
,...,\sum_{i_{m}}^{N}\varepsilon_{i_{m}}^{(m)}e_{i_{m}}\right)  \right\Vert
^{p^{\ast}}\right)  ^{\frac{1}{p^{\ast}}}\\
&  \quad\leq\left(  S_{m-1,p}\right)  ^{-1}\sup_{\varepsilon_{i_{2}}%
^{(2)},...,\varepsilon_{i_{m}}^{(m)}}\left\Vert T\left(  \cdot,\sum_{i_{2}%
}^{N}\varepsilon_{i_{2}}^{(2)}e_{i_{2}},...,\sum_{i_{m}}^{N}\varepsilon
_{i_{m}}^{(m)}e_{i_{m}}\right)  \right\Vert \\
&  \quad\leq\left(  S_{m-1,p}\right)  ^{-1}\Vert T\Vert.
\end{align*}

\end{proof}

In order to complete the proof of (\ref{a12}) we need some preparatory
results. We need to introduce some notation: For each $M\geq2$, let
$\Omega_{M}$ be the set
\[
\left\{  \frac{2j\pi}{M}:j=0,\ldots,M-1\right\}
\]
Let $m\in\mathbb{N}$, and $(a_{n_{1}\ldots n_{m}})_{n_{1},\ldots,n_{m}=1}^{N}$
be an array of scalars, and $0<p<\infty,$ and $M\geq2$. We define%
\[
E_{m,M,p}\left(  (a_{n_{1},\ldots,n_{m}})_{n_{1},\ldots,n_{m}=1}^{N}\right)
=\left(  \left(  \frac{1}{M}\right)  ^{Nm}\sum_{\left(  t_{n_{1}}%
,...,t_{n_{m}}\right)  \in\left(  \Omega_{M}^{N}\right)  ^{m}}\left\vert
\sum_{n_{1},\ldots,n_{m}=1}^{N}a_{n_{1}\ldots n_{m}}e^{it_{n_{1}}^{\left(
1\right)  }}\cdot\ldots\cdot e^{it_{n_{m}}^{\left(  m\right)  }}\right\vert
^{p}\right)  ^{\frac{1}{p}}.
\]
Using the Dominated Convergence Theorem it is possible to prove that
\[
\lim_{M\rightarrow\infty}\left(  E_{m,M,p}\left(  (a_{i_{1},\ldots,i_{m}%
})_{n_{1},\ldots,n_{m}=1}^{N}\right)  \right)  =\left(  \mathbb{E}%
_{m}\left\vert \sum_{n_{1},\ldots,n_{m}=1}^{N}a_{n_{1}\ldots n_{m}}%
\varepsilon_{n_{1}}^{(1)}\cdot\ldots\cdot\varepsilon_{n_{m}}^{(m)}\right\vert
^{p}\right)  ^{\frac{1}{p}}\text{.}%
\]
We need the following auxiliary result:

\begin{lemma}
Let $m\in\mathbb{N}$, and $(a_{n_{1}\ldots n_{m}})_{n_{1},\ldots,n_{m}=1}^{N}$
be an array of scalars, and $0<p<\infty,$ and $M\geq2$. Then
\[
E_{m,M,p}\left(  (a_{n_{1},\ldots,n_{m}})_{n_{1},\ldots,n_{m}=1}^{N}\right)
=E_{m,M,p}\left(  (a_{n_{1},\ldots,n_{m}}e^{is_{n_{1}}^{\left(  1\right)  }%
}\cdot\ldots\cdot e^{is_{n_{m}}^{\left(  m\right)  }})_{n_{1},\ldots,n_{m}%
=1}^{N}\right)
\]
for all $\mathbf{s}=(s_{n_{1}}^{\left(  1\right)  },...,s_{n_{m}}^{\left(
m\right)  })\in\left(  \Omega_{M}^{N}\right)  ^{m}$.
\end{lemma}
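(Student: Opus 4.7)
The plan is to exploit the cyclic-group structure of $\Omega_M$ under addition modulo $2\pi$. Observe that $\Omega_M = \{2j\pi/M : j=0,\ldots,M-1\}$ is isomorphic to $\mathbb{Z}/M\mathbb{Z}$ under addition mod $2\pi$, and the map $T_M \to \Omega_M$, $e^{i\theta}\mapsto \theta$, converts multiplication in the multiplicative group $T_M$ into addition mod $2\pi$ in $\Omega_M$. In particular, for any fixed $s \in \Omega_M$, the translation $t\mapsto (t+s)\bmod 2\pi$ is a bijection of $\Omega_M$ onto itself.

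First I would expand the right-hand side according to the definition of $E_{m,M,p}$, distributing exponentials so that the term $a_{n_1\ldots n_m}\,e^{is_{n_1}^{(1)}}\cdots e^{is_{n_m}^{(m)}}\,e^{it_{n_1}^{(1)}}\cdots e^{it_{n_m}^{(m)}}$ is rewritten as $a_{n_1\ldots n_m}\,e^{i(t_{n_1}^{(1)}+s_{n_1}^{(1)})}\cdots e^{i(t_{n_m}^{(m)}+s_{n_m}^{(m)})}$. Since $e^{i\theta}$ depends on $\theta$ only modulo $2\pi$, one may replace each exponent by $(t_{n_j}^{(j)}+s_{n_j}^{(j)}) \bmod 2\pi$, which, by the observation above, still lies in $\Omega_M$.

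Next I would perform the change of summation variables $u_{n_j}^{(j)} := (t_{n_j}^{(j)}+s_{n_j}^{(j)})\bmod 2\pi$ for each $j\in\{1,\ldots,m\}$ and each index $n_j$. Since translation by a fixed element of a finite abelian group is a bijection, the induced map $(\Omega_M^N)^m \to (\Omega_M^N)^m$ is bijective; the normalizing factor $(1/M)^{Nm}$ is unaffected, and the rewritten summand becomes exactly
\[
\left|\sum_{n_1,\ldots,n_m=1}^N a_{n_1\ldots n_m}\,e^{iu_{n_1}^{(1)}}\cdots e^{iu_{n_m}^{(m)}}\right|^p.
\]
Summing over the reindexed variables thus yields $E_{m,M,p}\bigl((a_{n_1\ldots n_m})_{n_1,\ldots,n_m=1}^N\bigr)^p$, and taking $p$-th roots gives the claimed equality.

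There is essentially no hard step: the only subtlety is recognizing that although the elements of $\Omega_M$ are parametrized as real numbers in $[0,2\pi)$, the presence of the exponential $e^{i\cdot}$ identifies angles modulo $2\pi$, so the translation $t\mapsto t+s$ is legitimately a bijection of the finite summation domain. Everything else reduces to a reindexing of a finite sum.
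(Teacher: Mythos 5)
Your proof is correct and follows essentially the same route as the paper's: expand the definition, combine the exponentials into $e^{i(t_{n_j}^{(j)}+s_{n_j}^{(j)})}$, and reindex the finite sum via the translation $t\mapsto t+s$ on $\left(\Omega_M^N\right)^m$. Your explicit remark that the exponential identifies angles modulo $2\pi$, so that translation is genuinely a bijection of $\Omega_M$, makes precise a point the paper leaves implicit, but the argument is the same.
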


\begin{proof}
If $\mathbf{s}=(s_{n_{1}}^{\left(  1\right)  },...,s_{n_{m}}^{\left(
m\right)  })\in\left(  \Omega_{M}^{N}\right)  ^{m}$, then
\begin{align*}
&  E_{m,M,p}\left(  (a_{n_{1},\ldots,n_{m}}e^{is_{n_{1}}^{\left(  1\right)  }%
}\cdot\ldots\cdot e^{is_{n_{m}}^{\left(  m\right)  }})_{n_{1},\ldots,n_{m}%
=1}^{N}\right) \\
&  =\left(  \left(  \frac{1}{M}\right)  ^{Nm}\sum_{\left(  t_{n_{1}}^{\left(
1\right)  },...,t_{n_{m}}^{\left(  m\right)  }\right)  \in\left(  \Omega
_{M}^{N}\right)  ^{m}}\left\vert \sum_{n_{1},\ldots,n_{m}=1}^{N}a_{n_{1}\ldots
n_{m}}e^{is_{n_{1}}^{\left(  1\right)  }}\cdot\ldots\cdot e^{is_{n_{m}%
}^{\left(  m\right)  }}e^{it_{n_{1}}^{\left(  1\right)  }}\cdot\ldots\cdot
e^{it_{n_{m}}^{\left(  m\right)  }}\right\vert ^{p}\right)  ^{\frac{1}{p}}\\
&  =\left(  \left(  \frac{1}{M}\right)  ^{Nm}\sum_{\left(  t_{n_{1}}^{\left(
1\right)  },...,t_{n_{m}}^{\left(  m\right)  }\right)  \in\left(  \Omega
_{M}^{N}\right)  ^{m}}\left\vert \sum_{n_{1},\ldots,n_{m}=1}^{N}a_{n_{1}\ldots
n_{m}}e^{i\left(  s_{n_{1}}^{\left(  1\right)  }+t_{n_{1}}^{\left(  1\right)
}\right)  }\cdot\ldots\cdot e^{i\left(  s_{n_{m}}^{\left(  m\right)
}+t_{n_{m}}^{\left(  m\right)  }\right)  }\right\vert ^{p}\right)  ^{\frac
{1}{p}}\\
&  =\left(  \left(  \frac{1}{M}\right)  ^{Nm}\sum_{\left(  u_{n_{1}}^{\left(
1\right)  },...,u_{n_{m}}^{\left(  m\right)  }\right)  \in\left(  \Omega
_{M}^{N}\right)  ^{m}}\left\vert \sum_{n_{1},\ldots,n_{m}=1}^{N}a_{n_{1}\ldots
n_{m}}e^{iu_{n_{1}}^{\left(  1\right)  }}\cdot\ldots\cdot e^{iu_{n_{m}%
}^{\left(  m\right)  }}\right\vert ^{p}\right)  ^{\frac{1}{p}}\\
&  =E_{m,M,p}\left(  (a_{n_{1},\ldots,n_{m}})_{n_{1},\ldots,n_{m}=1}%
^{N}\right)  .
\end{align*}

\end{proof}

\bigskip

Now, we enunciate and prove a kind of multiple Khinchine inequality that
extends and unifies the left side of the inequalities in Theorems \ref{multikhin} and  \ref{d}. We emphasize that the  theorem below was
introduced by R. Blei, in the linear case ($m=1$), in \cite[chapter II:
section 6]{blei}.

\begin{theorem}
[Multiple Blei--Khinchine inequality]\label{665}Let $1\leq p\leq2$, and
$m\geq1$, and $M\geq2$, and let $(a_{n_{1},\ldots,n_{m}})_{n_{1},\ldots
,n_{m}=1}^{N}$ be an array of scalars. There is a constant $S_{m,M,p}\geq1$,
such that
\begin{equation}
\left(  \sum\limits_{n_{1},\ldots,n_{m}=1}^{N}\left\vert a_{n_{1}\ldots n_{m}%
}\right\vert ^{2}\right)  ^{1/2}\leq S_{m,M,p}\cdot E_{m,M,p}\left(
(a_{n_{1}\ldots n_{m}})_{n_{1},\ldots,n_{m}=1}^{N}\right)  , \label{unifies}%
\end{equation}
for all $N\in\mathbb{N}$. Moreover,
\[
S_{m,M,p}\leq C_{(2,...,2,p)}^{\sigma_{m+1},\left(  p^{\ast},\infty
,...,\infty\right)  \mathbb{C}}\cdot r_{M}^{-m}.
\]

\end{theorem}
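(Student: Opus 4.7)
The plan is to realise the discrete average $E_{m,M,p}$ as (up to the factor $r_M^{-m}$) the sup-norm of a carefully chosen $(m+1)$-linear form, and then apply the mixed Littlewood inequality carrying the constant $C_{(2,\ldots,2,p)}^{\sigma_{m+1},(p^{\ast},\infty,\ldots,\infty)\mathbb{C}}$. Since $1\leq p\leq 2$ gives $p^{\ast}\geq 2$, and the exponents $(2,\ldots,2,p)$ sit on the Hardy--Littlewood boundary for an $(m+1)$-linear form with first factor in $\ell_{p^{\ast}}$ and the remaining $m$ factors in $\ell_{\infty}$, that constant is finite.

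Concretely, I would enumerate $\Omega_M^{Nm}$ by $\mathbf{t}=(t_{n_j}^{(j)})_{j=1,\ldots,m;\,n_j=1,\ldots,N}$ and introduce
\[
T\colon\ell_{p^{\ast}}^{M^{Nm}}\times X_{\infty}^{N}\times\cdots\times X_{\infty}^{N}\longrightarrow\mathbb{C}
\]
by
\[
T(x,y^{(1)},\ldots,y^{(m)})=\sum_{\mathbf{t}\in\Omega_M^{Nm}}x_{\mathbf{t}}\sum_{n_1,\ldots,n_m=1}^{N}a_{n_1\ldots n_m}\,e^{it_{n_1}^{(1)}}\cdots e^{it_{n_m}^{(m)}}\,y_{n_1}^{(1)}\cdots y_{n_m}^{(m)},
\]
so that $|T(e_{\mathbf{t}},e_{n_1},\ldots,e_{n_m})|=|a_{n_1\ldots n_m}|$ for every $\mathbf{t}$ and every tuple $(n_1,\ldots,n_m)$. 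Applying the mixed Littlewood inequality to $T$ and unwinding $\sigma_{m+1}$ (which puts the $\ell_p$-sum over the $\ell_{p^{\ast}}$-index $\mathbf{t}$ innermost, with $\ell_2$-sums over all of $n_1,\ldots,n_m$ outside) collapses the left-hand side to $(M^{Nm})^{1/p}\bigl(\sum|a_{n_1\ldots n_m}|^{2}\bigr)^{1/2}$, yielding
\[
(M^{Nm})^{1/p}\Bigl(\sum|a_{n_1\ldots n_m}|^{2}\Bigr)^{1/2}\;\leq\;C_{(2,\ldots,2,p)}^{\sigma_{m+1},(p^{\ast},\infty,\ldots,\infty)\mathbb{C}}\;\Vert T\Vert.
\]

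The remaining task is to bound $\Vert T\Vert\leq(M^{Nm})^{1/p}r_M^{-m}E_{m,M,p}$. Dualising in the first variable expresses $\Vert T\Vert$ as the supremum over $y^{(j)}\in B_{\ell_{\infty}^N}$ of $\bigl(\sum_{\mathbf{t}}|S_{\mathbf{t}}(y^{(1)},\ldots,y^{(m)})|^{p}\bigr)^{1/p}$, where $S_{\mathbf{t}}$ denotes the inner $y$-polynomial appearing in $T$. The Basic Lemma (Lemma \ref{664}) applied with $A=\{2,\ldots,m+1\}$ reduces this to the supremum over $y^{(j)}\in T_M^{N}$ at the cost of the factor $r_M^{-m}$, and the shift-invariance lemma just proved above shows that on this restricted domain the quantity is independent of $y$ and equal to $(M^{Nm})^{1/p}E_{m,M,p}\bigl((a_{n_1\ldots n_m})\bigr)$. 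Combining the two estimates and cancelling $(M^{Nm})^{1/p}$ delivers the claim with $S_{m,M,p}\leq C_{(2,\ldots,2,p)}^{\sigma_{m+1},(p^{\ast},\infty,\ldots,\infty)\mathbb{C}}\,r_M^{-m}$. The principal bookkeeping obstacle will be correctly reading off the nested-sum order dictated by $\sigma_{m+1}$; the analytic content is then a clean combination of $\ell_{p}$--$\ell_{p^{\ast}}$ duality, the shift invariance of $E_{m,M,p}$, and the Basic Lemma.
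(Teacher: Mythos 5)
Your proposal is correct and follows essentially the same route as the paper: you build the same $(m+1)$-linear form indexed by the grid $\Omega_M^{Nm}$ in the first ($\ell_{p^{\ast}}$) variable, apply the permuted mixed Littlewood inequality with constant $C_{(2,\ldots,2,p)}^{\sigma_{m+1},(p^{\ast},\infty,\ldots,\infty)\mathbb{C}}$, and control $\Vert T\Vert$ via the Basic Lemma together with the shift-invariance of $E_{m,M,p}$. The only difference is bookkeeping --- the paper normalizes $E_{m,M,p}=1$ and divides the coefficients of $T$ by $M^{Nm/p}$, whereas you carry the factor $(M^{Nm})^{1/p}$ through and cancel it at the end; your $\ell_{p}$--$\ell_{p^{\ast}}$ duality step is the paper's H\"older estimate in its extremal form.
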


\begin{remark}
 If $M=2,$
the inequality (\ref{unifies}) recovers Theorem \ref{multikhin}, and when
$M\rightarrow\infty,$ the inequality (\ref{unifies}) recovers Theorem \ref{d}.
\end{remark}

\begin{proof}
Let $(a_{n_{1},\ldots,n_{m}})_{n_{1},\ldots,n_{m}=1}^{N}$ be an array of
scalars, such that $E_{m,M,p}\left(  (a_{n_{1},\ldots,n_{m}})_{n_{1}%
,\ldots,n_{m}=1}^{N}\right)  =1$. Then, by the previous lemma,
\begin{align*}
&  E_{m,M,p}\left(  (a_{n_{1},\ldots,n_{m}}e^{is_{n_{1}}^{\left(  1\right)  }%
}\cdot\ldots\cdot e^{is_{n_{m}}^{\left(  m\right)  }})_{n_{1},\ldots,n_{m}%
=1}^{N}\right) \\
&  =\left(  \left(  \frac{1}{M}\right)  ^{Nm}\sum_{\left(  t_{n_{1}}^{\left(
1\right)  },...,t_{n_{m}}^{\left(  m\right)  }\right)  \in\left(  \Omega
_{M}^{N}\right)  ^{m}}\left\vert \sum_{n_{1},\ldots,n_{m}=1}^{N}a_{n_{1}\ldots
n_{m}}e^{is_{n_{1}}^{\left(  1\right)  }}\cdot\ldots\cdot e^{is_{n_{m}%
}^{\left(  m\right)  }}e^{it_{n_{1}}^{\left(  1\right)  }}\cdot\ldots\cdot
e^{it_{n_{m}}^{\left(  m\right)  }}\right\vert ^{p}\right)  ^{\frac{1}{p}}\\
&  =1
\end{align*}
for all $(s_{n_{1}}^{\left(  1\right)  },...,s_{n_{m}}^{\left(  m\right)
})\in\left(  \Omega_{M}^{N}\right)  ^{m}$.

Thus,
\[
\left(  \left(  \frac{1}{M}\right)  ^{Nm}\sum_{(r_{n_{1}}^{\left(  1\right)
},...,r_{n_{m}}^{\left(  m\right)  })\in\left(  T_{M}^{N}\right)  ^{m}%
}\left\vert \sum_{n_{1},\ldots,n_{m}=1}^{N}a_{n_{1}\ldots n_{m}}w_{n_{1}%
}^{\left(  1\right)  }\cdot\ldots\cdot w_{n_{m}}^{\left(  m\right)  }r_{n_{1}%
}^{\left(  1\right)  }\cdot\ldots\cdot r_{n_{m}}^{\left(  m\right)
}\right\vert ^{p}\right)  ^{\frac{1}{p}}=1,
\]
for all $(w_{n_{1}}^{\left(  1\right)  },...,w_{n_{m}}^{\left(  m\right)
})\in\left(  T_{M}^{N}\right)  ^{m}.$ Note that, there is $\tau$ a bijection
of $\left\{  1,...,M^{Nm}\right\}  $ on $\left(  T_{M}^{N}\right)  ^{m}$, and
then we can write%
\begin{align}
&  \left(  \left(  \frac{1}{M}\right)  ^{Nm}\sum_{(r_{n_{1}}^{\left(
1\right)  },...,r_{n_{m}}^{\left(  m\right)  })\in\left(  T_{M}^{N}\right)
^{m}}\left\vert \sum_{n_{1},\ldots,n_{m}=1}^{N}a_{n_{1}\ldots n_{m}}w_{n_{1}%
}^{\left(  1\right)  }\cdot\ldots\cdot w_{n_{m}}^{\left(  m\right)  }r_{n_{1}%
}^{\left(  1\right)  }\cdot\ldots\cdot r_{n_{m}}^{\left(  m\right)
}\right\vert ^{p}\right)  ^{\frac{1}{p}}\label{esto}\\
&  =\left(  \left(  \frac{1}{M}\right)  ^{Nm}\sum_{i=1}^{M^{Nm}}\left\vert
\sum_{n_{1},\ldots,n_{m}=1}^{N}a_{n_{1}\ldots n_{m}}\tau_{n_{1}\ldots n_{m}%
}^{\left(  i\right)  }w_{n_{1}}^{\left(  1\right)  }\cdot\ldots\cdot w_{n_{m}%
}^{\left(  m\right)  }\right\vert ^{p}\right)  ^{\frac{1}{p}}\nonumber\\
&  =1\nonumber
\end{align}
for all $(w_{n_{1}}^{\left(  1\right)  },...,w_{n_{m}}^{\left(  m\right)
})\in\left(  T_{M}^{N}\right)  ^{m}$.

Consider the $\left(  m+1\right)  $-linear form $T:X_{p^{\ast}}^{M^{Nm}}\times
X_{\infty}^{N}\times\cdot\cdot\cdot\times X_{\infty}^{N}\rightarrow\mathbb{C}$
given by
\[
T(e_{i},e_{n_{1}},...,e_{n_{m}})=\frac{a_{n_{1}\ldots n_{m}}\tau_{n_{1}\ldots
n_{m}}^{\left(  i\right)  }}{M^{\frac{Nm}{p}}},\quad n_{j}\in\left\{
1,...,N\right\}  \text{ and }i\in\left\{  1,...,M^{Nm}\right\}  .
\]
Note that $\Vert T\Vert_{\left\{  2,...,m+1\right\}  ,M}\leq1$. In fact, using
H\"{o}lder's inequality and the equality (\ref{esto}) we get
\begin{align*}
&  \left\vert \sum_{i=1}^{M^{Nm}}\sum_{n_{1},\ldots,n_{m}=1}^{N}%
T(e_{i},e_{n_{1}},...,e_{n_{m}})w_{n_{1}}^{\left(  1\right)  }\cdot\ldots\cdot
w_{n_{m}}^{\left(  m\right)  }\cdot z_{i}\right\vert \\
&  =\left\vert \sum_{i=1}^{M^{Nm}}\sum_{n_{1},\ldots,n_{m}=1}^{N}%
\frac{a_{n_{1}\ldots n_{m}}\tau_{n_{1}\ldots n_{m}}^{\left(  i\right)  }%
}{M^{\frac{Nm}{p}}}w_{n_{1}}^{\left(  1\right)  }\cdot\ldots\cdot w_{n_{m}%
}^{\left(  m\right)  }\cdot z_{i}\right\vert \\
&  \leq\left(  \sum_{i=1}^{M^{Nm}}\left\vert \sum_{n_{1},\ldots,n_{m}=1}%
^{N}\frac{a_{n_{1}\ldots n_{m}}\tau_{n_{1}\ldots n_{m}}^{\left(  i\right)  }%
}{M^{\frac{Nm}{p}}}w_{n_{1}}^{\left(  1\right)  }\cdot\ldots\cdot w_{n_{m}%
}^{\left(  m\right)  }\right\vert ^{p}\right)  ^{\frac{1}{p}}\cdot\left(
\sum_{i=1}^{M^{Nm}}\left\vert z_{i}\right\vert ^{p^{\ast}}\right)  ^{\frac
{1}{p^{\ast}}}\\
&  \leq\left(  \left(  \frac{1}{M}\right)  ^{Nm}\sum_{i=1}^{M^{Nm}}\left\vert
\sum_{n_{1},\ldots,n_{m}=1}^{N}a_{n_{1}\ldots n_{m}}\tau_{n_{1}\ldots n_{m}%
}^{\left(  i\right)  }w_{n_{1}}^{\left(  1\right)  }\cdot\ldots\cdot w_{n_{m}%
}^{\left(  m\right)  }\right\vert ^{p}\right)  ^{\frac{1}{p}}\\
&  \overset{(\ref{esto})}{=}1
\end{align*}
for all $(w_{n_{1}}^{\left(  1\right)  },...,w_{n_{m}}^{\left(  m\right)
})\in\left(  T_{M}^{N}\right)  ^{m}$, and $\left(  z_{i}\right)  _{i}\in
B_{X_{p^{\ast}}}$. Therefore, $\Vert T\Vert_{\left\{  2,...,m+1\right\}
,M}\leq1$.

Moreover, using the last inequality in (\ref{orlicz}), Lemma \ref{664} and the
above norm estimate, we have
\begin{align*}
\left(  \sum\limits_{n_{1},\ldots,n_{m}=1}^{N}\left\vert a_{n_{1},\ldots
,n_{m}}\right\vert ^{2}\right)  ^{1/2}  &  =\left(  \sum\limits_{n_{1}%
,\ldots,n_{m}=1}^{N}\left(  \sum_{i=1}^{M^{Nm}}\left\vert \left(  \frac{1}%
{M}\right)  ^{\frac{mN}{p}}a_{n_{1}\ldots n_{m}}\tau_{n_{1}\ldots n_{m}%
}^{\left(  i\right)  }\right\vert ^{p}\right)  ^{2/p}\right)  ^{\frac{1}{2}}\\
&  =\left(  \sum\limits_{n_{1},\ldots,n_{m}=1}^{N}\left(  \sum_{i=1}^{M^{Nm}%
}\left\vert T(e_{i},e_{n_{1}},...,e_{n_{m}})\right\vert ^{p}\right)
^{2/p}\right)  ^{\frac{1}{2}}\\
&  \leq C_{(2,...,2,p)}^{\sigma_{m+1},\left(  p^{\ast},\infty,...,\infty
\right)  \mathbb{C}}\Vert T\Vert\\
&  \leq C_{(2,...,2,p)}^{\sigma_{m+1},\left(  p^{\ast},\infty,...,\infty
\right)  \mathbb{C}}\cdot r_{M}^{-m}\Vert T\Vert_{\left\{  2,...,m+1\right\}
,M}\\
&  \leq C_{(2,...,2,p)}^{\sigma_{m+1},\left(  p^{\ast},\infty,...,\infty
\right)  \mathbb{C}}\cdot r_{M}^{-m}.
\end{align*}
Thus, the inequality follows and $S_{m,M,p}\leq C_{(2,...,2,p)}^{\sigma
_{m+1},\left(  p^{\ast},\infty,...,\infty\right)  \mathbb{C}}\cdot r_{M}^{-m}$.
\end{proof}

Finally, making $M\rightarrow\infty$, Theorem \ref{665} recovers Theorem
\ref{d} and the estimate
\[
S_{m,M,p}\leq C_{(2,...,2,p)}^{\sigma_{m+1},\left(  p^{\ast},\infty
,...,\infty\right)  \mathbb{C}}\cdot r_{M}^{-m}%
\]
becomes
\[
\left(  S_{m,p}\right)  ^{-1}\leq C_{(2,...,2,p)}^{\sigma_{m+1},\left(
p^{\ast},\infty,...,\infty\right)  \mathbb{C}}.
\]
Thus, the previous inequality combined with Proposition \ref{multiracsam} and
(\ref{ppkk}) give us%
\[
\left(  S_{m,p}\right)  ^{-1}\leq C_{(2,...,2,p)}^{\sigma_{m+1},\left(
p^{\ast},\infty,...,\infty\right)  \mathbb{C}}\leq C_{(p,2,..,2)}^{id,\left(
p^{\ast},\infty,...,\infty\right)  \mathbb{C}}\leq\left(  S_{m,p}\right)
^{-1}%
\]
for all ~$m\geq1$ and for all $p\in\left[  1,2\right]  .$ Finally, by
Proposition \ref{e} we have
\[
C_{(2,...,2,p^{\ast})}^{\sigma_{m+1},\left(  p,\infty,...,\infty\right)
\mathbb{C}}=C_{(p^{\ast},2,..,2)}^{id,\left(  p,\infty,...,\infty\right)
\mathbb{C}}=\left(  \widetilde{A_{p^{\ast}}}\right)  ^{-\left(  m-1\right)  }%
\]
for all $m\geq2$ and for all $p\in\left[  2,\infty\right]  ,$ as we announced.
The special case $m=2$ and $p=\infty$ was previously obtained in \cite[page
31]{blei}.

\end{document}